\definecolor{trueblue}{rgb}{0.0, 0.45, 0.81}
\definecolor{truegreen}{rgb}{0.13, 0.55, 0.13}
\newcommand{\eps}{\varepsilon} 
\newcommand{\dx}{\, {\rm d}x}
\newcommand{\e}{\varepsilon}
\theoremstyle{plain}
\newtheorem{theorem}{Theorem}[section]
\newtheorem{lemma}[theorem]{Lemma}
\newtheorem{remark}[theorem]{Remark}
\newtheorem{proposition}[theorem]{Proposition}
\newtheorem{corollary}[theorem]{Corollary}
\newtheorem*{ltheorem}{Liouville's Theorem}
\newenvironment{step}[1]{\textbf{Step #1}.}{}
\theoremstyle{definition}
\renewcommand{\tilde}{\widetilde}
\renewcommand{\d}{ \mathrm{d}}
\DeclareMathOperator{\dv}{div}
\DeclareMathOperator{\dist}{dist}
\numberwithin{equation}{section}
\newcommand{\N}{\mathbb{N}}
\newcommand{\R}{\mathbb{R}}
\renewcommand{\S}{\mathbb{S}}
\renewcommand{\H}{\mathcal{H}}
\def \mb{\mathbb}
\def \n{\nabla}
\def \d{\mathrm{d}}
\def \mc{\mathcal}
\def \Mob{\textup{M\"ob}_+}
\def \Mobr{\textup{M\"ob}_-}
\def \SO{\textup{SO}}
\newcommand{\weak}{\rightharpoonup}
\begin{document}
	
\title[Sharp stability of the M\"obius group in arbitrary dimension]{Sharp quantitative stability of the M\"obius group\\ among sphere-valued maps in arbitrary dimension}
	
\author[Andr\'e Guerra]{Andr\'e Guerra} 
\address[Andr\'e Guerra]{Institute for Theoretical Studies, ETH Zürich, CLV, Clausiusstrasse 47, 8006 Zürich, Switzerland}
\email{andre.guerra@eth-its.ethz.ch}

\author[Xavier Lamy]{Xavier Lamy} 
\address[{Xavier Lamy}]{Institut de Mathématiques de Toulouse, Université Paul Sabatier, 118, route de Narbonne, F-31062 Toulouse Cedex 8, France}
\email{Xavier.Lamy@math.univ-toulouse.fr}	
	
\author{Konstantinos Zemas}
\address[Konstantinos Zemas]{Institute for Analysis and Numerics, University of M\"unster\\
Einsteinstrasse 62, 48149 M\"unster, Germany}
\email{konstantinos.zemas@uni-muenster.de}

\begin{abstract}
\footnotesize
In this work we prove a sharp quantitative form of Liouville's theorem, which asserts that, for all $n\geq 3$, the weakly conformal maps of $\mb S^{n-1}$ with degree  $\pm 1$ 
are  M\"obius transformations.
In the case $n=3$ this estimate was first obtained by Bernand-Mantel, Muratov and Simon 
(Arch.\ Ration.\ Mech.\ Anal.\ 239(1):219-299, 2021), 
 with  different proofs given later on by Topping, and by Hirsch and the third author.
The higher-dimensional case $n\geq 4$ requires new arguments because it is genuinely nonlinear: the linearized version of the estimate involves quantities which cannot control the 
distance to M\"obius transformations  in the conformally invariant Sobolev norm.
Our main tool to circumvent this difficulty is an inequality introduced by Figalli and Zhang in their proof of a sharp stability estimate for the Sobolev inequality.

\end{abstract} 
	
\maketitle

	
\section{Introduction}\label{sec:1}

Rigidity theorems of geometric nature are ubiquitous in analysis, geometry and mathematical physics, one of the most classical, yet prominent examples being \textit{Liouville's theorem} on the characterization of isometries and conformal transformations among domains of $\R^n$ ($n\geq 2$, resp.\ $n\geq 3$). Its qualitative as well as quantitative extensions (cf.\ for instance 
{\cite{faraco2005geometric, friesecke2002theorem,Iwaniec2002,mullersverakyan,reshetnyak1994stability, Yan_1, Yan_2} and the references therein) have formed a very active research direction over the recent years, not only because of a pure geometric interest, but also in view of applications in mathematical models of materials science.


Our starting point is the spherical version of Liouville's theorem for conformal maps.  A Sobolev map $u\in W^{1,n-1}( \mb S^{n-1};\mb S^{n-1})$ is called \textit{weakly conformal} if and only if 
\begin{equation}
\label{eq:genconf}
(\n_T u)^{t} \n_T u= \frac{|\n_T u|^2}{n-1} I_{ x}
\end{equation}
for  $\H^{n-1}$\text{-a.e.} $x$ on $\S^{n-1}$.  
Here and throughout the paper $\nabla_T u\in \R^{n\times (n-1) 
}$ denotes the extrinsic gradient of $u$  with respect to a 
local orthonormal frame $\{\tau_1,\dots, \tau_{n-1}\}$ on $\S^{n-1}$  indicated by the unit normal, and $I_x$ stands for the identity transformation on $T_x\S^{n-1}$. For maps in that class one  can define the \textit{topological degree} of $u$ through
\begin{equation}\label{def: degree}
\deg u := \fint_{\S^{n-1}}\bigg\langle u,\bigwedge_{i=1}^{n-1}\partial_{\tau_i}u\bigg\rangle\,\mathrm{d}\H^{n-1}\,,
\end{equation}
 see e.g.\ \cite{brezis1995degree} for more information on the degree. 

Let us write $\Mob(\S^{n-1})$ for the group of \textit{orientation-preserving Möbius maps} of $\S^{n-1}$, see  Subsection \ref{subs:conformal_maps}  
for the precise definition.  It is a remarkable geometric fact that solutions of \eqref{eq:genconf} are very rigid:

\clearpage
\begin{ltheorem}
Let $u\in W^{1,n-1}(\mb S^{n-1};\mb S^{n-1})$ be a solution of \eqref{eq:genconf}.
\begin{enumerate}[ref={\normalfont(\roman*)},label={\normalfont(\roman*)}]
\item if $n=3$ and $\deg u =1$, then $u\in \Mob(\mb S^{n-1})$;
\smallskip
\item\label{it:n>3} if $n\geq 4$ and $\deg u \geq 1$, then $u\in \Mob(\mb S^{n-1})$\,.
\end{enumerate}
\end{ltheorem}
In fact, \ref{it:n>3} holds locally even if $u$  solves  \eqref{eq:genconf} only on a domain of $\mb S^{n-1}$  \cite{Iwaniec2002}, while for $n=3$ 
 solutions to \eqref{eq:genconf} of general degree  have also been classified  \cite{Lemaire, wood}.  We refer the reader to  \cite[Appendix A]{zemas2022rigidity} for a simple proof of Liouville's Theorem when $\deg u = 1$.

In this paper we prove a sharp quantitative version of Liouville's Theorem  on $\S^{n-1}$  in all dimensions $n\geq 3$.  To state the result, we consider the class of admissible maps
\begin{equation}\label{admissible_class_of_maps}
\mathcal{A}_{\S^{n-1}}:=\left\{u\in W^{1,n-1}(\S^{n-1};\S^{n-1}):\  \mathrm{deg}\, u=1\right\}\,,
\end{equation}
and the deficit 
\begin{equation}\label{deficit_on_S_n-1}
\delta_{n-1}(u):=\fint_{\S^{n-1}}\bigg(\frac{|\nabla_T u|^2}{n-1}\bigg)^{\frac{n-1}{2}}\, \mathrm{d}\mathcal{H}^{n-1}-1\,.
\end{equation}
Note that the deficit  in \eqref{deficit_on_S_n-1}  is clearly a  conformally-invariant quantity which, by  a well-known inequality relating the $(n-1)$-Dirichlet energy and the degree of maps in $W^{1,n-1}(\S^{n-1};\S^{n-1})$,  
satisfies $\delta_{n-1}(u) \geq 0$ for all $u\in \mc A_{\mb S^{n-1}}$, with 
$$ \delta_{n-1}(u)=0 \iff  u\in \Mob(\mb S^{n-1})\,,$$
see  Corollary 
 \ref{cor:deficit} below.

Our main result shows that $\delta_{n-1}(u)$ measures  in a sharp fashion the distance of $u$ from a particular M\"obius map in an integral sense, namely we have the following.  
 
\smallskip

\begin{theorem}\label{main_thm}
Let $n\geq 3$. There exists a constant $C_n>0$ such that for every $u\in \mathcal{A}_{\S^{n-1}},$ 
\begin{equation}\label{conf_S_S_quantitative}
\inf_{\phi \in \Mob(\mb S^{n-1})} \fint_{\S^{n-1}} \left|\nabla_T u-\nabla_T \phi\right|^{n-1} \d \mc H^{n-1} \leq C_n \delta_{n-1}(u)\,.
\end{equation}
\end{theorem}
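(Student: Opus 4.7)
The plan is to proceed by contradiction and compactness. Suppose \eqref{conf_S_S_quantitative} fails: there is a sequence $u_k \in \mc A_{\S^{n-1}}$ with $\delta_{n-1}(u_k)\to 0$ but
\[
\inf_{\phi\in \Mob(\S^{n-1})} \fint_{\S^{n-1}} |\n_T u_k - \n_T\phi|^{n-1} \d\mc H^{n-1} \geq k\,\delta_{n-1}(u_k).
\]
Since both the deficit and the left-hand side are conformally invariant under pre- and post-composition by elements of $\Mob(\S^{n-1})$, I would normalize $u_k$ (via appropriate moment conditions) so that the optimal approximating Möbius map becomes the identity. Invoking Liouville's Theorem together with a standard concentration-compactness argument, the vanishing of the deficit forces $u_k \to \mathrm{id}_{\S^{n-1}}$ strongly in $W^{1,n-1}$.

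Writing $u_k = \mathrm{id} + v_k$ with $v_k\to 0$ in $W^{1,n-1}$ and respecting the sphere constraint, a Taylor expansion of \eqref{deficit_on_S_n-1} around the identity yields
\[
\delta_{n-1}(u_k) = Q(v_k) + R(v_k),
\]
where $Q$ is a non-negative quadratic form and $R$ collects higher-order terms. Infinitesimally, Liouville's Theorem identifies the kernel of $Q$ with the tangent space $T_{\mathrm{id}} \Mob(\S^{n-1})$; combined with a spectral-gap/Fredholm alternative argument and the normalization that has quotiented out this kernel, this provides the coercivity $Q(v)\geq c\,\|\n_T v\|_{L^2}^2$.

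For $n=3$, $L^2$-control of $\n_T v$ coincides with the desired conformally invariant $L^{n-1}$-control and the argument concludes; for $n\geq 4$ the linearized coercivity is \emph{strictly weaker} than what the theorem demands, which is the essential obstacle and the reason for the appeal to Figalli--Zhang. To close this gap, I would split $\S^{n-1}$ at a threshold $M>0$ into a \emph{small} set $\{|\n_T v_k|\leq M\}$ and a \emph{large} set $\{|\n_T v_k|> M\}$. On the small set, the pointwise bound $|\n_T v_k|^{n-1}\leq M^{n-3}|\n_T v_k|^2$ reduces $L^{n-1}$-control to $L^2$-control, which is absorbed by $Q$. On the large set, a sharpened pointwise inequality of the form
\[
\bigg(\frac{|\n_T u|^2}{n-1}\bigg)^{(n-1)/2} - J(u)\;\geq\; c_n\,|\n_T v|^{n-1},
\]
valid precisely because $|\n_T v|$ dominates $|\n_T\mathrm{id}|$ there (here $J(u)$ is the Jacobian-type pointwise quantity that integrates to $\deg u$), yields direct $L^{n-1}$-control by $\delta_{n-1}(u_k)$ upon integration. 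A careful optimization of the threshold $M$ balances both regimes, preserves the sharp linear rate, and contradicts the failure assumption, completing the proof.
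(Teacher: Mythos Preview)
Your high-level architecture (contradiction/compactness, M\"obius normalization, linearization around the identity) matches the paper's Steps~0,~1 and~3. The gaps are in the core local estimate, and they are not merely cosmetic.

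\textbf{The large-set pointwise inequality is false.} You claim that on $\{|\nabla_T v|>M\}$ one has
\[
\Big(\tfrac{|\nabla_T u|^2}{n-1}\Big)^{(n-1)/2}-J(u)\;\geq\;c_n\,|\nabla_T v|^{n-1}.
\]
But the left side vanishes at any point where $\nabla_T u$ is conformal and orientation-preserving, regardless of how large $|\nabla_T u|$ (and hence $|\nabla_T v|=|\nabla_T u-P_T|$) is. Concretely, if $u$ behaves locally like a concentrating M\"obius map on a small set, the deficit density is zero there while $|\nabla_T v|$ is arbitrarily large. The deficit density measures failure of conformality, not gradient size, so the inequality cannot hold pointwise.

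\textbf{The naive Taylor expansion does not yield $L^2$-control either.} Writing $\delta_{n-1}(u)=Q(v)+R(v)$ with $Q$ quadratic, the remainder $R$ contains cubic and higher terms of order $\int|\nabla_T v|^3$, which are not dominated by $Q(v)\sim\|\nabla_T v\|_{L^2}^2$ when $\nabla_T v$ concentrates. So even the ``small-set'' half of your argument, which needs $\|\nabla_T v\|_{L^2}^2\lesssim\delta_{n-1}(u)$, is not justified by a straight second-order expansion. (The paper flags exactly this obstruction in its proof sketch.)

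The paper circumvents both issues \emph{simultaneously} by using the Figalli--Zhang inequality not as a threshold device but as a one-sided pointwise lower bound for $|X+Y|^p$ (Lemma~\ref{lemma:figalli}): applied with $X=RP_T$, $Y=\nabla_T v_R$, it produces, with no remainder to control,
\[
\delta_{n-1}(u)\;\geq\;\tfrac{1-\kappa}{2}\Big(\textstyle\fint|\nabla_T v_R|^2-\tfrac{n-1}{1-\kappa}\fint|v_R|^2\Big)+c_0\textstyle\fint|\nabla_T v_R|^{n-1}.
\]
After the Poincar\'e/spherical-harmonics step this gives $\fint|\nabla_T(\tilde u-Rx)|^{n-1}\lesssim\delta_{n-1}(u)+\dist^2(A;\SO(n))$ with $A=\nabla\tilde u_h(0)$. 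There is then a second substantial step you do not address: proving $\dist^2(A;\SO(n))\lesssim\delta_{n-1}(u)$. Your normalization ``quotient out the kernel of $Q$'' does not do this for free; the paper obtains it by expanding the degree identity $1=\det A\,\fint_{B_1}\det(I_n+\nabla w_h)$ via null-Lagrangians (Lemma~\ref{lemma_on_null_Lagrangians}), then combining Wente's isoperimetric inequality with $L^2$--$L^{n-1}$ interpolation.
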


\smallskip

\begin{remark}[\text{Sharpness}]\label{sharpness}
\normalfont Estimate \eqref{conf_S_S_quantitative} is sharp in the sense that, on the right-hand side,  the deficit  cannot be replaced with $\delta_{n-1}(u)^\beta$ for some $\beta>1$.  An example addressing the optimality of the exponent is discussed in Appendix \ref{optimality_example}.
\end{remark}

\begin{remark}[Maps with other degrees]
\normalfont By  inequality \eqref{ineq: wente_on_sphere},  we have
$$| \deg u | >1 \quad \implies \quad \delta_{n-1}(u) \geq |\deg u|-1\geq 1\,.$$
In particular,  this easily implies that \eqref{conf_S_S_quantitative} also holds for $u\in W^{1,n-1}(\mb S^{n-1};\mb S^{n-1})$ with $|\deg u | >1$, cf.\ Step 0 in Section \ref{sec:proof} below.
Likewise,  the case of maps of degree $-1$ is completely analogous to the degree 1 case, and can simply be derived by the latter, by composing any map of degree -1 with the orientation-reversing  orthogonal map $(x_1,\dots,x_{n-1}, x_n)\mapsto (x_1,\dots,x_{n-1},-x_n)$, and replacing $\Mob(\mb S^{n-1})$ with $ \Mobr(\mb \S^{n-1})$ (the group of orientation-reversing M\"obius maps). 
\end{remark}

Theorem \ref{main_thm} was first proved for $n=3$ in \cite[Theorem 2.4]{bernand2019quantitative}, with simpler and different proofs having been supplied in \cite{hirsch2022mobius2, topping2020}.  
Thus, the novelty of this paper is the higher dimensional, genuinely non-linear case $n\geq 4$. 
We also refer to \cite{ Deng_et_al_2,Deng_et_al_1} for related results for half-harmonic maps from $\R$ to $\S$ and a local stability result for maps from $\S^2$ to $\S^2$ with higher degree, 
 where the assumption of apriori closeness to a harmonic map however prevents bubbling phenomena for almost energy-minimizing maps. Without this strong assumption, an optimal quantitative improvement has been established very recently in \cite{rupflin}, which, rougly speaking, asserts that maps from $\S^2$ to $\S^2$ of higher degree and small excess energy are close to a collection of rational maps that
describe the behaviour at very different scales.  
The interested reader is also referred to \cite{Engelstein_2022_Neumayer_Spolaor} and the references therein for further quantitative stability results related to problems in conformal geometry.

Our proof  of Theorem \ref{main_thm}  is based on the approach from \cite{hirsch2022mobius2},  which however does not extend trivially to higher dimensions, the main difficulty being the  genuine nonlinearity and  degenerate convexity of the conformal Dirichlet energy. 

 \subsection*{Sketch of proof.} Let us give here a sketch of the argument,  which can be thought of as a quantitative perturbation of the one in \cite[Appendix A]{zemas2022rigidity} for the exact case. The starting point of the proof is that it suffices to prove Theorem \ref{main_thm} for maps $u\in \mathcal{A}_{\S^{n-1}}$ for which
\[\delta_{n-1}(u)\ll 1\,, \qquad \fint_{\S^{n-1}}u=0\,, \qquad \|u-x\|_{W^{1,n-1}(\S^{n-1})}\ll 1\,.\]
While the first reduction is clear, the reduction to zero-mean maps can always be achieved by precomposition with a Möbius transformation,  see  Lemma \ref{lemma:topological}, while the last one is ensured by a contradiction/compactness
 argument, see Lemma \ref{lemma:compactness} and Step 3 in Section \ref{sec:proof}. After these reductions, and since $u$ is $\S^{n-1}$-valued, in the case $n=3$ of \cite{hirsch2022mobius2}, the deficit transforms into the deficit in the sharp Poincar{\' e} inequality on $\S^2$ (cf. \eqref{eq:firstpoincare}, \eqref{eq:secondpoincare}), namely
\[\delta_2(u)=\frac{1}{2}\fint_{\S^2}|\nabla_Tu|^2-\fint_{\S^2}|u|^2\gtrsim \fint_{\S^2}|\nabla_T (u-Ax)|^2\,,\]
where $Ax$ is the linear part of $u$, i.e., its projection onto the first order spherical harmonics (cf. Subsection \ref{subsec:spherical}). Keeping in mind that $u$ was precomposed with a Möbius map to achieve the zero-mean condition, the proof is concluded in this case by showing that $A$ can be replaced in the above estimate by a rotation matrix, i.e., that 
\[\mathrm{dist}^2(A;\SO(n))\lesssim \delta_2(u)\,.\]
The last estimate is achieved by an appropriate expansion of the formula for the degree \eqref{def: degree} around the linear part of $u$. 

In the case $n\geq 4$, which is the case of study of this work, the previous argument however cannot be adapted directly. Indeed, setting (for simplicity) $w:=u-x$, and naively Taylor-expanding the deficit around the identity would formally yield
\begin{align*}
\delta_{n-1}(u) & =\frac{1}{2}\bigg(\fint_{\S^{n-1}}|\nabla_T w|^2-(n-1)\fint_{\S^{n-1}}|w|^2+\frac{n-3}{n-1}\fint_{\S^{n-1}}(\mathrm{div}_{\S^{n-1}}w)^2\bigg)\\
& \qquad +\fint_{\S^{n-1}}\mathcal{O}(|\nabla_Tw|^3)\,,
\end{align*}
which however does not give a control on the desired optimal norm we want in the left hand side of \eqref{conf_S_S_quantitative}. Moreover, even though the first part of the quadratic form appearing in the expansion gives again the deficit in the sharp Poincar{\' e} inequality on $\S^{n-1}$, a suboptimal estimate with respect to the $L^2$-norm  on the left hand side of \eqref{conf_S_S_quantitative} would not apriori be possible, because of the presence of the higher-order (cubic) terms in the expansion.   

To overcome these issues we consider a  Taylor-type lower inequality instead of the exact  expansion for the conformal Dirichlet energy (cf.   Lemma~\ref{lemma:figalli} below),  which can be thought of as an appropriate interpolation between the  first up to quadratic  terms, and the desired $\fint_{\S^{n-1}}|\nabla_T w|^{n-1}$-term. Such an expansion is a special case of the ones  introduced in \cite{figalli2022sobolev} to prove a sharp quantitative version of the Sobolev inequality  in $\R^n$, cf.\ also \cite{bianchi1991egnell, figalli2010neumayer, frank} for related results in this direction. Surprisingly, such an expansion finally leads us to an estimate of the form
\[\fint_{\S^{n-1}}|\nabla_T (u-Ax)|^{n-1}\lesssim \delta_{n-1}(u)+\mathrm{dist}^2(A;\SO(n))\,,\]
where again $Ax$ is the linear part of $u$. The proof would again be concluded if the last term on the right hand side of the above inequality is optimally estimated by the deficit itself, which is achieved also by an appropriate expansion of the degree of $u$ around its linear part, and $L^2$-$L^{n-1}$ interpolation estimates, see Step (2b) in Section \ref{sec:proof}.  

\begin{remark}\label{toppings_approach}\normalfont Interestingly, the observation that the group $\Mob(\S^{n-1})$  can be used to fix the mean
value of maps in $\mathcal{A}_{\S^{n-1}}$ to 0 was also used in the proof of Theorem \ref{main_thm} in the case $n=3$ by P. Topping \cite{topping2020}. While in \cite{hirsch2022mobius2} and here we
used it to link the problem to the stability of the sharp Poincar{\' e} inequality on $\S^{n-1}$, in \cite{topping2020} it is used in order to start the harmonic map heat flow (cf.\ \cite {LinWang}) with initial datum $u \in \mathcal{A}_{\S^2}$ with $\fint_{\S^2} u=0$. With
this centering, the flow does not produce a bubble in finite time and converges as $t\to\infty$ to a map in $\Mob(\S^{2})$. This limiting map turns out to be the one for which the desired stability
estimate is satisfied. It would be interesting to see if  such an approach can be followed to get an alternative proof of  Theorem \ref{main_thm} also in the case $n\geq 4$ by using the nonlinear $(n-1)$-harmonic map heat flow 
\cite{hungerbuhler97},
 which will be a question for future work.\\[-18pt]
\end{remark}

 \subsection*{Plan of the paper.} The plan of the paper is the following. After fixing some notation, in Section \ref{sec:geometry} we collect some basic facts about spherical harmonics and Möbius maps on $\S^{n-1}$, as well as some boundary expressions of integrals of Jacobian subdeterminants (null-Lagrangians) over the unit ball. These will be useful in the proof of Theorem \ref{main_thm}, which is the content of Section 3. For the reader's convenience the proof is divided into several steps, following in spirit and method the approach of \cite{hirsch2022mobius2} for the case $n=3$. Step 2 therein (divided into further substeps) contains the main novelty in the argumentation for the higher dimensional case $n\geq 4$, as these were sketched previously. 
  In Appendix \ref{optimality_example} we present an example showing the optimality of the estimate \eqref{conf_S_S_quantitative} (cf. Remark \ref{sharpness}). 

\subsection*{Notation}
We use $\langle\cdot,\cdot\rangle$ to denote the inner product between vectors in $\R^n$ and $A:B$ to denote the Hilbert--Schmidt inner product between any two matrices $A,B\in \R^{n\times m}$.  
For every two vectors $a\in \R^n$ and $b\in \R^m$ we denote by $a\otimes b\in \R^{n\times m}$ their tensor product, 
\[(a\otimes b)_{ij}:=a_ib_j\ \ \forall i=1,\dots,n \ \text{and } j=1,\dots,m\,.\]
As mentioned above, $\nabla_Tu$ denotes the extrinsic gradient of $u\colon \mb S^{n-1}\to \mb S^{n-1}$ (when viewed as a map with values in $\R^n$),   represented in local coordinates by the $n\times (n-1)$-matrix with entries \[(\nabla_T u)_{ij} = \langle \nabla_T u^i, \tau_j\rangle\,,\quad  \forall i=1,\dots,n\, \ \mathrm{and }\ j=1,\dots,n-1\,.\]
 Here, $\{\tau_1,\dots,\tau_{n-1}\}$ is a local orthonormal frame on $T_x\S^{n-1}$ indicated by the unit normal, i.e., for every $x\in \S^{n-1}$ the set of vectors $\{\tau_1(x),\dots, \tau_{n-1}(x), x\}$ is a positively oriented orthonormal frame of $\R^n$. 

By $\mathrm{id}_{\S^{n-1}}, I_x, I_n$ we denote the identity map on $\S^{n-1}$, the identity transformation on $T_x\S^{n-1}$, and the identity matrix in $\R^{n\times n}$, respectively. We also set for brevity,
\[P_T:=\nabla_T\mathrm{id}_{\S^{n-1}}\,,\ \mathrm{i.e.}\,,\ (P_T)_{ij}=\langle e_i,\tau_j\rangle\,.\]
 By $\omega_n$ we denote the Euclidean volume of the unit ball in $\R^n$ and by $\SO(n)$ we label as usual the group of orientation-preserving rotation matrices, i.e.,
\[\SO(n):=\{R\in \R^{n\times n}\colon R^tR=I_n\, \ \text{and } \mathrm{det}R=1\}\,.\]
For any $A\subset \R^n$, we denote by $\mathbf{1}_A$ its 
indicator
 function, i.e., 
\begin{align*}
\mathbf{1}_A(y):=\begin{cases} 1\,, \quad y\in A\,,\\
	0\,, \quad y\notin A\,.
\end{cases}
\end{align*}
We consider maps in the Sobolev space 
\[W^{1,n-1}(\S^{n-1};\S^{n-1}):=\{u\in W^{1,n-1}(\S^{n-1} ; \R^n)\colon \ |u(x)|=1 \ \text{for } \H^{n-1}\text{-a.e. } x\in \S^{n-1}\}\,.\]
 Note that, in the definition of the degree for such maps in \eqref{def: degree}, we identify $\bigwedge_{i=1}^{n-1}\partial_{\tau_i}u$ with $*\bigwedge_{i=1}^{n-1}\partial_{\tau_i}u$ through Hodge-duality. 
 
By $ \lesssim,  \ll $ we mean that the corresponding inequality is valid up to a
 multiplicative (resp. sufficiently small) 
 constant that depends only on the dimension, but is allowed to vary from line to line. 
  With this notation, for any two quantities $F,G$ we say that $F\sim G$ iff $F\lesssim G$ and $G\lesssim F$.

\section{Geometry of $\mathbb S^{n-1}$  and null-Lagrangians}
\label{sec:geometry}

\subsection{Spherical harmonics}\label{subsec:spherical} 
We begin this section by recalling some useful facts about spherical harmonics, referring the reader to \cite{groemer} for further details.
For every $k\in\mathbb{N}$ let $H_{n,k}$ be the subspace of $L^2(\S^{n-1};\R^n)$ consisting of vector fields whose components are $k$-th order \textit{vectorial spherical harmonics} $\{\psi_{n,k,j}\}_{j}$, i.e., eigenfunctions of the Laplace-Beltrami operator $-\Delta_{\S^{n-1}}$ corresponding to the eigenvalue 
\begin{equation}\label{def: eigenvalues}
\lambda_{n,k}:=k(k+n-2)\,,
\end{equation}
 normalized so that $\|\psi_{n,k,j}\|_{L^2(\S^{n-1})}=1$.  As it is well known,  $d_{n,k}:=\mathrm{dim}H_{n,k}<+\infty$\,, and  we have the orthogonal decomposition
\begin{equation}
\label{eq:decomsphericalharm}
L^2(\S^{n-1};\R^n)=\bigoplus_{k=0}^\infty H_{n,k}\,.
\end{equation} 
In fact,  the spherical harmonics are not just mutually $L^2$-orthogonal, but also $W^{1,2}$-orthogonal. In particular, if  for $u\in W^{1,2}(\S^{n-1};\R^n)$,  we write 
$$u=\sum_{k=0}^\infty \sum_{j=1}^{ d_{n,k}} \alpha_{n,k,j} \psi_{n,k,j}\,, $$
then we have the \textit{Parseval identities}
\begin{equation}
\label{eq:parseval}
\int_{\mb S^{n-1}} |u|^2 = \sum_{k=0}^\infty \sum_{j=1}^{ d_{n,k}} |\alpha_{n,k,j}|^2,
\qquad \int_{\mb S^{n-1}} |\n_T u|^2 = \sum_{k=0}^\infty \sum_{j=1}^{ d_{n,k}} \lambda_{n,k}|\alpha_{n,k,j}|^2\,.
\end{equation}
We consider the orthogonal projection 
\begin{equation}\label{projections_in_spherical_harmonics}
\Pi_{n,k}\colon L^2(\mb S^{n-1};\R^n)\to H_{n,k}\,.
\end{equation}
Clearly $H_{n,0}$ is the span of the constant function  equal to 1, so
\begin{equation}\label{eq:zeroproj}
\Pi_{n,0}(u)=\fint_{\S^{n-1}} u\,,
\end{equation}
while $H_{n,1}$ is the span of the coordinate functions $\{x_i\}_{i=1}^n$, and  it is easy to check that
\begin{equation} \label{eq:firstproj}
\Pi_{n,1}(u)(x) = \nabla u_h(0)x\,,
\end{equation}
where $u_h\colon \overline{B_1}\to \R^n$ denotes the  component-wise harmonic extension of $u\in L^2(\S^{n-1};\R^n)$  in the interior of the unit ball. From \eqref{def: eigenvalues} and \eqref{eq:parseval} we deduce several sharp forms of the Poincar\'e inequality, for instance 
\begin{align}
\label{eq:firstpoincare}
\fint_{\mb S^{n-1}} \big|\n_T u\big|^2 & \geq (n-1) \fint_{\mb S^{n-1}} \big|u- \Pi_{n,0}(u)\big|^2,\\
 \label{eq:secondpoincare}
\fint_{\mb S^{n-1}} \big|\n_T (u- \Pi_{n,1}(u))\big|^2 & \geq 2n \fint_{\mb S^{n-1}} \big|u- \big(\Pi_{n,0}(u)+ \Pi_{n,1}(u)\big)\big|^2 .
\end{align}
Another relatively simple consequence of \eqref{eq:parseval} is the sharp estimate
\begin{equation}
\label{eq:estimateharm}
\fint_{B_1} |\n u_h|^2\,\mathrm{d}x \leq \frac{n}{n-1} \fint_{\mb S^{n-1}} |\n_T u|^2\,\mathrm{d}\H^{n-1}\,,
\end{equation}
whose proof can be found for instance in \cite[Lemma C.2]{zemas2022rigidity}).

\subsection{Conformal maps}\label{subs:conformal_maps}
We now turn to conformal maps of the sphere. We define $\Mob(\mb S^{n-1})$ to be the group of orientation-preserving M\"obius transformations of $\mb S^{n-1}$, that is,
\begin{equation}\label{def: Mobius_group} 
\Mob(\mb S^{n-1}) := \{R \phi_{\xi,\lambda}: R \in \SO(n), \xi \in \S^{n-1}, \lambda>0\}\,,
\end{equation}
where 
\[
\phi_{\xi,\lambda}:=\sigma_{\xi}^{-1}\circ i_{\lambda}\circ\sigma_{\xi}\,,
\] 
$\sigma_{\xi}\colon \S^{n-1}\to T_{\xi}\S^{n-1}\cup\{\infty\}$ being the  stereographic projection  from $-\xi\in \S^{n-1}$\,,  and $i_{\lambda}:T_{\xi}\S^{n-1}\mapsto T_{\xi}\S^{n-1}$ the dilation in $T_{\xi}\S^{n-1}$ by factor $\lambda>0$. 
Explicitly, we have
\begin{gather*}
\sigma_\xi(x)=-\frac{1}{1+\langle x,\xi\rangle}(x- \langle x,\xi\rangle \xi)\,,
\quad
\sigma_\xi^{-1}(y)
=- \Big( \frac{2}{1+|y|^2}\Big) y +\Big(\frac{1-|y|^2}{1+|y|^2}\Big)\xi,
\\
\phi_{\xi,\lambda} (x) = \frac{-\lambda^2 (1-\langle x,\xi\rangle )\xi + 2 \lambda(x-\langle x,\xi\rangle \xi) + (1+\langle x,\xi\rangle) \xi}{\lambda^2 (1-\langle x,\xi\rangle) + (1+ \langle x,\xi\rangle)}\, ,
\end{gather*}
for all $x\in\S^{n-1}$ and $y\in T_\xi\S^{n-1}$.

We next recall  a well-known inequality between the conformally invariant $(n-1)$-Dirichlet energy and the degree of maps on the sphere.  
\begin{lemma}\label{Wente_on_sphere}
For every $u\in W^{1,n-1}(\S^{n-1};\S^{n-1})$ there holds
\begin{equation}\label{ineq: wente_on_sphere}
\fint_{\S^{n-1}}\bigg(\frac{|\nabla_T u|^2}{n-1}\bigg)^{\frac{n-1}{2}}\, \mathrm{d}\mathcal{H}^{n-1} \geq |\deg u|\,,
\end{equation}
with equality if and only if $u$ is weakly conformal.
\end{lemma}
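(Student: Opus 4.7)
The plan is to establish a pointwise inequality that, once integrated, immediately yields \eqref{ineq: wente_on_sphere}. First I would exploit the pointwise constraint $|u|^2 = 1$, which, upon differentiating in each tangential direction, gives $\langle u, \partial_{\tau_i} u\rangle = 0$ a.e.\ for $i=1,\dots,n-1$. Hence all $n-1$ tangential derivatives $\partial_{\tau_i}u$ lie in $T_{u(x)}\S^{n-1}$, the $(n-1)$-dimensional subspace of $\R^n$ orthogonal to $u(x)$. By the defining property of the Hodge dual, the vector $\bigwedge_{i=1}^{n-1}\partial_{\tau_i}u$ is orthogonal to each $\partial_{\tau_i}u$, hence parallel to $u(x)$, so that
\[\bigg\langle u,\bigwedge_{i=1}^{n-1}\partial_{\tau_i}u\bigg\rangle = \pm\,\bigg|\bigwedge_{i=1}^{n-1}\partial_{\tau_i}u\bigg|\quad \text{a.e.\ on } \S^{n-1}.\]

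Next I would apply Hadamard's inequality to bound the magnitude of the wedge product by the product of the individual norms, followed by AM--GM to bound the latter in terms of the Dirichlet density:
\[\bigg|\bigwedge_{i=1}^{n-1}\partial_{\tau_i}u\bigg| \leq \prod_{i=1}^{n-1}|\partial_{\tau_i}u| \leq \bigg(\frac{1}{n-1}\sum_{i=1}^{n-1}|\partial_{\tau_i}u|^2\bigg)^{\frac{n-1}{2}} = \bigg(\frac{|\nabla_T u|^2}{n-1}\bigg)^{\frac{n-1}{2}}.\]
Combining the last two displays yields the pointwise estimate
\[\bigg|\bigg\langle u,\bigwedge_{i=1}^{n-1}\partial_{\tau_i}u\bigg\rangle\bigg| \leq \bigg(\frac{|\nabla_T u|^2}{n-1}\bigg)^{\frac{n-1}{2}}\quad \text{a.e.},\]
after which integrating over $\S^{n-1}$ and applying the triangle inequality to the definition \eqref{def: degree} of the degree produces \eqref{ineq: wente_on_sphere}.

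For the equality statement, the analysis is mostly algebraic: equality in Hadamard forces the vectors $\partial_{\tau_1}u,\dots,\partial_{\tau_{n-1}}u$ to be mutually orthogonal a.e., and equality in AM--GM forces their norms to coincide a.e.; together these two conditions are exactly equivalent to the weak conformality condition \eqref{eq:genconf}. This settles the ``only if'' implication. The main technical point I expect to be slightly delicate is the ``if'' direction: given that $u$ is weakly conformal, one also needs the sign $\pm$ in the first display to be constant a.e., so that no cancellation occurs when passing from $\big|\fint\langle u,\bigwedge_i\partial_{\tau_i}u\rangle\big|$ to $\fint\big|\langle u,\bigwedge_i\partial_{\tau_i}u\rangle\big|$. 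This can be handled by invoking the regularity theory for weakly conformal $W^{1,n-1}$ maps (in fact Liouville's Theorem, recalled above, forces $u$ to be a M\"obius transformation and in particular smooth), so that the sign of the Jacobian-like quantity is locally constant on $\S^{n-1}$ away from its zero set, and hence globally constant by the connectedness of $\S^{n-1}$.
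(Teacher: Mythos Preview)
Your proof is correct and follows essentially the same route as the paper's: both establish the pointwise bound $\big|\langle u,\bigwedge_i \partial_{\tau_i}u\rangle\big|\le \big(|\nabla_T u|^2/(n-1)\big)^{(n-1)/2}$ and then integrate. The only cosmetic difference is that the paper applies AM--GM directly to the singular values of $\nabla_T u$ (writing $|\bigwedge_i\partial_{\tau_i}u|=\sqrt{\det(\nabla_T u^{t}\nabla_T u)}$) rather than factoring through Hadamard's inequality, and it leaves the ``if'' direction of the equality case implicit, whereas you spell it out.
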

\begin{proof}
Applying the arithmetic mean-geometric mean inequality to the eigenvalues of $\sqrt{\nabla_Tu^t\nabla_T u}$, 
 together with 
  the fact that  $|u|=1$ $\H^{n-1}$-a.e. on $\S^{n-1}$,   from \eqref{def: degree} 
 we obtain
\begin{align}\label{AM_GM_on_the_sphere}
\begin{split}	
\fint_{\S^{n-1}}\bigg(\frac{|\nabla_T u|^2}{n-1}\bigg)^{\frac{n-1}{2}}&\geq \fint_{\S^{n-1}}\sqrt{\mathrm{det}(\nabla_Tu^t\nabla_Tu)}\\
& =\fint_{\S^{n-1}}\bigg|\bigwedge_{i=1}^{n-1}\partial_{\tau_i}u\bigg|
\geq \fint_{\S^{n-1}}\bigg|\big\langle u,\bigwedge_{i=1}^{n-1}\partial_{\tau_i}u\big\rangle\bigg|\geq |\mathrm{deg}(u)|\,. 
\end{split}
\end{align}
If equality holds then we must have equality in the arithmetic mean-geometric mean inequality, which means that all of the eigenvalues of $\sqrt{\nabla_Tu^t\nabla_T u}$ must be the same  $\H^{n-1}$-a.e. on $\mb S^{n-1}$. In other words, if equality holds,  $u$ is weakly conformal.
\end{proof}

As an immediate consequence of Lemma \ref{Wente_on_sphere} and \eqref{admissible_class_of_maps}, we have the following.

\begin{corollary}\label{cor:deficit}
For $u\in \mc A_{\mb S^{n-1}}$, the deficit in \eqref{deficit_on_S_n-1} satisfies $\delta_{n-1}(u)\geq 0$. We have $\delta_{n-1}(u)=0$ if and only if $u$ is weakly conformal, which by Liouville's Theorem holds if and only if $u\in \Mob(\mb S^{n-1})$.
\end{corollary}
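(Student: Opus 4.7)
The statement is essentially a concatenation of Lemma~\ref{Wente_on_sphere}, the definition of $\mathcal{A}_{\S^{n-1}}$, and Liouville's Theorem, so the plan is to chain these three ingredients.

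First, to obtain the non-negativity $\delta_{n-1}(u)\geq 0$, I would simply substitute $\deg u=1$ (which holds by the definition of $\mathcal{A}_{\S^{n-1}}$ in \eqref{admissible_class_of_maps}) into the inequality \eqref{ineq: wente_on_sphere} of Lemma~\ref{Wente_on_sphere}. Comparing the right-hand side of \eqref{ineq: wente_on_sphere} with the definition \eqref{deficit_on_S_n-1} of the deficit, this immediately yields
\[
\delta_{n-1}(u) \;=\; \fint_{\S^{n-1}}\bigg(\frac{|\nabla_T u|^2}{n-1}\bigg)^{\frac{n-1}{2}}\,\d\mathcal{H}^{n-1}-1 \;\geq\; |\deg u|-1 \;=\; 0.
\]

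Next, for the characterization of the equality case $\delta_{n-1}(u)=0$, I would again invoke Lemma~\ref{Wente_on_sphere}: the chain of inequalities in \eqref{AM_GM_on_the_sphere} becomes an equality precisely when the arithmetic mean--geometric mean inequality applied to the singular values of $\nabla_T u$ is saturated $\mathcal{H}^{n-1}$-a.e., which is exactly the condition that $u$ satisfies \eqref{eq:genconf}, i.e., that $u$ is weakly conformal.

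Finally, to identify weakly conformal maps in $\mathcal{A}_{\S^{n-1}}$ with elements of $\Mob(\S^{n-1})$, I would apply Liouville's Theorem as stated. Since $\deg u=1$, the hypothesis of case (i) is met when $n=3$, while the hypothesis of case (ii) (that is, $\deg u\geq 1$) is met when $n\geq 4$; in either regime the theorem yields $u\in \Mob(\S^{n-1})$. The converse implication, that every $\phi\in\Mob(\S^{n-1})$ satisfies $\delta_{n-1}(\phi)=0$, follows because Möbius maps are conformal (this is built into the definition \eqref{def: Mobius_group} via the fact that stereographic projections, dilations and rotations are all conformal), hence solve \eqref{eq:genconf}, and therefore achieve equality throughout \eqref{AM_GM_on_the_sphere} with $|\deg\phi|=1$.

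There is no real obstacle here: all the analytic content sits in Lemma~\ref{Wente_on_sphere} (the AM--GM argument applied to the eigenvalues of $\sqrt{\nabla_T u^t\nabla_T u}$) and in Liouville's Theorem, both of which I may invoke. The corollary is therefore a two-line bookkeeping statement assembling these facts.
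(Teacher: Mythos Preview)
Your proposal is correct and follows exactly the route the paper takes: the corollary is stated there as an immediate consequence of Lemma~\ref{Wente_on_sphere} together with the definition of $\mathcal{A}_{\S^{n-1}}$ and Liouville's Theorem, and your write-up simply makes those implications explicit. There is nothing to add or correct.
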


\begin{remark} \label{wente_in_R_n} 
\normalfont For maps $u\in (W^{1,n-1}\cap L^\infty)(\S^{n-1};\R^n)$ the quantity
\[V_n(u):=\fint_{\S^{n-1}}\bigg\langle u,\bigwedge_{i=1}^{n-1}\partial_{\tau_i}u\bigg\rangle\,\mathrm{d}\mathcal{H}^{n-1}\]
gives the \textit{relative extrinsic volume} of the image of $u$: for instance,  if $u$ is a smooth embedding, then $V_n(u)$ is the  signed  volume of the open set in $\R^n$ whose boundary is $u(\S^{n-1})$, divided by the volume of the unit ball. Of course, if $u\in W^{1,n-1}(\S^{n-1};\S^{n-1})$, we have $V_n(u) = \deg u$, cf.\ \eqref{def: degree}.  In fact, $V_n(u)$ extends to an analytic functional in $W^{1,n-1}(\S^{n-1};\R^n)$, for which \textit{Wente's isoperimetric inequality} holds:
\begin{equation}\label{eq:isoperimetric}
\fint_{\S^{n-1}}\bigg(\frac{|\nabla_T u|^2}{n-1}\bigg)^{\frac{n-1}{2}} \geq \fint_{\S^{n-1}}\sqrt{\mathrm{det}(\nabla_Tu^t\nabla_Tu)}\geq  \left| \fint_{\S^{n-1}}\bigg\langle u,\bigwedge_{i=1}^{n-1}\partial_{\tau_i}u\bigg\rangle\right|^{\frac{n-1}{n}}\,,
\end{equation}
the first inequality being again a consequence of the arithmetic mean-geometric mean inequality and the  second  
one being the functional form of the isoperimetric inequality,  see e.g.\ \cite{almgren1986iso}, \cite[equation (1.9)]{zemas2022rigidity},   \cite[Lemma~1.3]{muller1990},  or \cite{wente1969}. Equality holds in \eqref{eq:isoperimetric} if and only if the image $u(\mb S^{n-1})$ is another round sphere and, in addition, $u$ is weakly conformal, that is, if and only if $u$ is  a Möbius transformation  of $\S^{n-1}$  up to translation and scaling. 
\end{remark}

Sequences with zero-mean and vanishing deficit exhibit strong compactness properties, see e.g.\  \cite[Lemma A.3]{zemas2022rigidity}:

\begin{lemma}[Compactness]\label{lemma:compactness}
Let $(u_j)_{j\in\N}\subset W^{1,n-1}(\mb S^{n-1}; \mb S^{n-1})$ be a sequence such that 
$$\fint_{\mb S^{n-1}} u_j =0, \qquad \deg u_j = 1, \qquad \lim_{j\to \infty}\delta_{n-1}(u_j) =0 \,.$$
Then there  exists  $R\in \SO(n)$ such that, up to a not-relabeled subsequence, 
$$u_j\to R\,\textup{id}_{\S^{n-1}} \text{ strongly in } W^{1,n-1}(\mb S^{n-1}; \mb S^{n-1})\,.$$
\end{lemma}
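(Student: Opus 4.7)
The plan is to extract a weak subsequential limit in $W^{1,n-1}$, identify it as a Möbius map via the equality case in Wente's isoperimetric inequality (Remark~\ref{wente_in_R_n}), use the zero-mean constraint to pin it down to a rotation, and finally upgrade to strong convergence by uniform convexity of $L^{n-1}$.

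First, by Lemma~\ref{Wente_on_sphere},
$$\fint_{\S^{n-1}}\bigg(\frac{|\nabla_T u_j|^2}{n-1}\bigg)^{(n-1)/2}\d\H^{n-1} = 1 + \delta_{n-1}(u_j) \to 1,$$
so $(u_j)$ is bounded in $W^{1,n-1}(\S^{n-1};\R^n)$ (using also $|u_j|=1$). Banach--Alaoglu and Rellich--Kondrachov yield a subsequence with $u_j \weak u_\infty$ in $W^{1,n-1}$ and $u_j \to u_\infty$ strongly in every $L^q$ with $q<\infty$; passing to the limit in the pointwise constraints gives $|u_\infty|=1$ a.e.\ and $\fint u_\infty=0$.

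Next, I would establish the weak continuity of the relative extrinsic volume $V_n$ of Remark~\ref{wente_in_R_n} along such a sequence. Since its integrand $\langle u, \bigwedge_{i=1}^{n-1}\partial_{\tau_i}u\rangle$ is a null-Lagrangian, integration by parts lets one re-express $V_n(u_j)$ as a sum of terms pairing strongly $L^q$-convergent powers of $u_j$ against a single weakly $L^{n-1}$-convergent factor of $\nabla_T u_j$, so that the uniform $L^\infty$-bound $|u_j|=1$ enables passage to the limit, yielding $V_n(u_\infty)=\lim_j V_n(u_j)=1$. Weak lower semicontinuity of the conformal Dirichlet $L^{(n-1)/2}$-energy together with Wente's isoperimetric inequality \eqref{eq:isoperimetric} then forces
$$V_n(u_\infty)^{(n-1)/n}=1\leq \fint_{\S^{n-1}}\bigg(\frac{|\nabla_T u_\infty|^2}{n-1}\bigg)^{(n-1)/2}\d\H^{n-1}\leq 1,$$
i.e., equality throughout in \eqref{eq:isoperimetric}. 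By the rigidity part of Remark~\ref{wente_in_R_n}, $u_\infty$ is a Möbius map onto a round sphere, which must be $\S^{n-1}$ itself since $|u_\infty|=1$; hence $u_\infty=R\phi_{\xi,\lambda}\in \Mob(\S^{n-1})$. The zero-mean constraint $\fint u_\infty=0$, combined with the explicit formula for $\phi_{\xi,\lambda}$ (which shows that $\phi_{\xi,\lambda}\to \xi$ as $\lambda\to 0$ and $\phi_{\xi,\lambda}\to -\xi$ as $\lambda\to\infty$ away from one point, so that $\fint\phi_{\xi,\lambda}$ is a nontrivial multiple of $\xi$ for $\lambda\neq 1$), then forces $\lambda=1$, so $u_\infty=R\,\mathrm{id}_{\S^{n-1}}$ with $R\in\SO(n)$.

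Finally, to upgrade weak to strong convergence, note that
$$\fint_{\S^{n-1}}\bigg(\frac{|\nabla_T u_j|^2}{n-1}\bigg)^{(n-1)/2}\d\H^{n-1} \to 1 = \fint_{\S^{n-1}}\bigg(\frac{|\nabla_T u_\infty|^2}{n-1}\bigg)^{(n-1)/2}\d\H^{n-1},$$
so $\|\nabla_T u_j\|_{L^{n-1}}\to\|\nabla_T u_\infty\|_{L^{n-1}}$. Combined with weak convergence in $L^{n-1}$ and uniform convexity of $L^{n-1}$ (since $n-1\geq 2$), Clarkson's inequality promotes this to strong $L^{n-1}$-convergence of the gradients; together with the strong $L^q$-convergence of $u_j$ this gives the claimed strong $W^{1,n-1}$-convergence. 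I expect the main obstacle to be the weak continuity of the degree-type functional $V_n$, i.e., ruling out loss of topological degree by bubbling in the critical space $W^{1,n-1}$: the uniform bound $|u_j|=1$ is what makes the null-Lagrangian argument work, while the hypothesis $\fint u_j=0$ plays the complementary role of preventing the limiting Möbius map itself from degenerating into a point concentration.
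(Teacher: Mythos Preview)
Your argument has a genuine gap at the weak continuity of $V_n$. In the critical space $W^{1,n-1}(\S^{n-1};\S^{n-1})$ the degree is \emph{not} weakly sequentially continuous: the concentrating M\"obius maps $u_j:=\phi_{\xi,1/j}$ satisfy $|u_j|\equiv 1$, $\deg u_j=1$, bounded energy, and $u_j\rightharpoonup \xi$ (a constant, of degree $0$) weakly in $W^{1,n-1}$, since the energy density $|\nabla_T u_j|^{n-1}$ concentrates at the antipode $-\xi$. This already shows that the uniform bound $|u_j|=1$ alone cannot rescue your passage to the limit. Your proposed integration-by-parts reduction to ``a single weakly $L^{n-1}$-convergent factor of $\nabla_T u_j$'' is in fact impossible for $n\geq 3$: the degree integrand is a top-degree form on the domain, and integration by parts only redistributes the $n-1$ derivatives among the components of $u$ --- it cannot lower their total count, so one is always left with a product of $n-1$ gradient factors sitting only in $L^1$. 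The zero-mean hypothesis is precisely what excludes the bubbling above, but in your outline it is invoked only \emph{after} the unjustified identity $V_n(u_\infty)=1$, so it is not doing that work.

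The paper avoids the issue entirely by reversing the order of the steps. It never appeals to weak continuity of the degree; instead it feeds the zero-mean condition and $|u_\infty|=1$ directly into the sharp Poincar\'e inequality \eqref{eq:firstpoincare} (via Jensen) to obtain the squeeze
\[
1=\lim_{j\to\infty}\fint_{\S^{n-1}}\Big(\tfrac{|\nabla_T u_j|^2}{n-1}\Big)^{\frac{n-1}{2}}
\;\geq\; \fint_{\S^{n-1}}\Big(\tfrac{|\nabla_T u_\infty|^2}{n-1}\Big)^{\frac{n-1}{2}}
\;\geq\; \Big(\fint_{\S^{n-1}}|u_\infty|^2\Big)^{\frac{n-1}{2}}=1\,.
\]
Equality in the first inequality gives norm convergence of $\nabla_T u_j$ in $L^{n-1}$, hence strong $W^{1,n-1}$-convergence (your uniform-convexity step), after which $\deg u_\infty=1$ follows from continuity of the degree under \emph{strong} convergence. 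Equality in Poincar\'e forces $u_\infty$ to lie in $H_{n,1}$, i.e.\ $u_\infty(x)=Rx$, and then $R\in\SO(n)$. The detours through Wente rigidity and the computation of $\fint\phi_{\xi,\lambda}$ are thereby bypassed.
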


\begin{proof}
The condition $\lim_{j\to \infty} \delta_{n-1}(u_j)=0$ can be rewritten as
$$\lim_{j\to \infty} \fint_{\S^{n-1}}\bigg(\frac{|\nabla_T u_j|^2}{n-1}\bigg)^{\frac{n-1}{2}}\, \mathrm{d}\mathcal{H}^{n-1}=1\,.$$
Thus, up to a subsequence we have $u_j \weak u$ weakly in $W^{1,n-1}(\mb S^{n-1};\mb S^{n-1})$; in particular, this convergence is also strong in $L^{n-1}(\S^{n-1};\S^{n-1})$, and so 
\[\Pi_{n,0} (u)=\fint_{\mb S^{n-1}} u = 0\,.\]  
By the sequential weak lower semicontinuity of the conformal Dirichlet energy,  Jensen's inequality,  and the Poincar\'e inequality \eqref{eq:firstpoincare}, we have
\begin{align*}
1 = \lim_{j\to \infty} \fint_{\S^{n-1}}\bigg(\frac{|\nabla_T u_j|^2}{n-1}\bigg)^{\frac{n-1}{2}}& \geq \fint_{\S^{n-1}}\bigg(\frac{|\nabla_T u|^2}{n-1}\bigg)^{\frac{n-1}{2}}\\
&  \geq \bigg(\fint_{\S^{n-1}}\frac{|\nabla_T u|^2}{n-1}\bigg)^{\frac{n-1}{2}} 
\geq \Big(\fint_{\mb S^{n-1}} |u|^2 \Big)^{\frac{n-1}{2}} = 1\,,
\end{align*}
hence equality holds throughout. This already shows the strong convergence $u_j \to u$ in $W^{1,n-1}$, so $\deg u =1$ as well.   Since $u$ satisfies \eqref{eq:firstpoincare} with equality, by \eqref{eq:parseval} we see that $u$ must be linear,  i.e.\ $u(x)=Rx$ for some $R\in \R^{n\times n}$. Finally,  since $u\colon \mb S^{n-1}\to \mb S^{n-1}$ has degree one, it must be that $R\in\SO(n)$. \qedhere
\end{proof}

A basic topological fact is that, up to a M\"obius map, the zero-mean condition of Lemma \ref{lemma:compactness} can always be achieved:

\begin{lemma}\label{lemma:topological}
Given $u\in \mathcal{A}_{\mathbb{S}^{n-1}}$ there exists $\psi\in \Mob(\mathbb{S}^{n-1})$ so that 
\begin{equation}\label{eq: mean_value_0}
\fint_{\S^{n-1}} u\circ\psi=0\,.	
\end{equation}	
\end{lemma}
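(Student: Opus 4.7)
The plan is to parametrize an $n$-dimensional slice of $\Mob(\mb S^{n-1})$ by the open unit ball $B_1\subset \R^n$ and to reduce \eqref{eq: mean_value_0} to a topological existence problem. For each $a\in B_1$ let $\psi_a\in \Mob(\mb S^{n-1})$ be given by $\psi_0:=\mathrm{id}_{\mb S^{n-1}}$ and, for $a\neq 0$, by $\psi_a:=\phi_{a/|a|,\lambda(|a|)}$ in the notation of Subsection~\ref{subs:conformal_maps}, with $\lambda\colon [0,1)\to (0,1]$ smooth and monotone, $\lambda(0)=1$ and $\lambda(r)\to 0$ as $r\to 1^-$. Since $|u|=1$ a.e., the map
\[
F\colon B_1\to \overline{B_1}\,,\qquad F(a):=\fint_{\mb S^{n-1}} u\circ \psi_a\,\d\H^{n-1}\,,
\]
is well-defined, and the lemma reduces to finding $a_0\in B_1$ with $F(a_0)=0$ and taking $\psi:=\psi_{a_0}$.

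Two features of $F$ are key. First, $F$ is continuous on $B_1$: if $a_k\to a\in B_1$ then $\psi_{a_k}\to \psi_a$ smoothly on $\mb S^{n-1}$, so the $L^\infty$-bound on $u$ and dominated convergence yield $F(a_k)\to F(a)$. Second, as $a\to \xi\in \partial B_1$ the parameter $\lambda(|a|)$ tends to $0$, so that $\psi_a$ squeezes the sphere onto $\xi$ and the pushforward probability measures $(\psi_a)_{\#}\bigl(\H^{n-1}/\H^{n-1}(\mb S^{n-1})\bigr)$ converge weakly to the Dirac mass $\delta_\xi$. For smooth $u$, this provides a continuous extension $\bar F\colon \overline{B_1}\to \overline{B_1}$ with $\bar F|_{\partial B_1}=u$, and the conclusion follows from a standard topological obstruction: if $0\notin \bar F(\overline{B_1})$, then $\bar F/|\bar F|$ would continuously extend $u\colon \mb S^{n-1}\to \mb S^{n-1}$ to the contractible ball $\overline{B_1}$, forcing $\deg u=0$ and contradicting $u\in \mc A_{\mb S^{n-1}}$.

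For general $u\in \mc A_{\mb S^{n-1}}$ I would approximate by smooth maps $u_k\to u$ strongly in $W^{1,n-1}$ with $\deg u_k=1$ (using density of smooth maps at the critical exponent together with $W^{1,n-1}$-continuity of the degree), apply the previous step to each $u_k$ to obtain $a_k\in B_1$ with $F_k(a_k):=\fint u_k\circ \psi_{a_k}=0$, and pass to a subsequence with $a_k\to a_\infty\in \overline{B_1}$. The main obstacle is to exclude $a_\infty\in \partial B_1$: if this happened, the weak concentration of $(\psi_{a_k})_{\#}$ at $\xi:=a_\infty$ combined with the constraint $|u_k|\equiv 1$ would force $u_k\circ \psi_{a_k}$ to be asymptotically constant of unit length, making $|F_k(a_k)|\to 1$ and contradicting $F_k(a_k)=0$. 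Once $a_\infty\in B_1$ is secured, the $L^1$-convergence $u_k\circ \psi_{a_k}\to u\circ \psi_{a_\infty}$---obtained from the smoothness of $\psi_{a_k}$ near $a_\infty$ and the $L^{n-1}$ convergence $u_k\to u$---passes to the limit and yields $F(a_\infty)=0$, as desired.
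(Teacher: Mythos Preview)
Your proposal is correct and follows essentially the same approach as the paper. The paper parametrizes the relevant slice of $\Mob(\mb S^{n-1})$ by the cylinder $\mb S^{n-1}\times[0,1]$ (with $F(\xi,0)=u(\xi)$ and $F(\xi,1)=\fint u$), while you parametrize directly by the ball $B_1$; the topological obstruction, the smooth approximation, and the exclusion of parameter degeneration in the limit are handled in the same way.
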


This lemma is well-known, cf.\  the proof of Liouville's theorem in \cite[Appendix A]{zemas2022rigidity} as well as Step 1 in the proof of \cite[Theorem 1.1]{hirsch2022mobius2} for the case $n=3$. However, to make the presentation reasonably self-contained, we also revise the argument here.

\begin{proof} Assume first that $u\in \mathcal{A}_{\S^{n-1}}$ is smooth,  so in particular $u$ is surjective. If $$b_u:=\fint_{\S^{n-1}} u=0\,,$$ then \eqref{eq: mean_value_0} is trivially satisfied for $\psi:=\mathrm{id}_{\S^{n-1}}$. If $b_u\neq 0$, we claim that there exists $\xi_0 \in \S^{n-1}$ and $\lambda_0>0$ such that \eqref{eq: mean_value_0} holds true for $\psi:=\phi_{\xi_0,\lambda_0}$.
Indeed, consider the map $F:\S^{n-1} \times [0,1]\mapsto \overline {B}_1$ defined as 
\begin{equation*}\label{homotopy_for_fixing_the_center}
F(\xi,\lambda):=\fint_{\S^{n-1}}u\circ\phi_{\xi,\lambda} \ \ \mathrm{for}\ \lambda\in(0,1]\,, \quad \mathrm{and} \ F(\xi,0)
:=u(\xi)=
\lim_{\lambda\searrow 0}F(\xi,\lambda)\,.
\end{equation*}
The map $F$ is continuous with $F(\S^{n-1},0)=u(\S^{n-1})=\S^{n-1}$ and $F(\S^{n-1},1)=\{b_u\}$, hence $F$ is a continuous homotopy between $\S^{n-1}$ and the point $b_u\in \overline {B}_1\diagdown \{0\}$.  Therefore there exists $\lambda_0\in(0,1)$ and $\xi_0\in \S^{n-1}$ such that $F(\xi_0,\lambda_0)=0$, as wished.

In the general case of a map $u\in \mathcal{A}_{\S^{n-1}}$, by the approximation property given in \cite[Lemma 7, Section I.4.]{brezis1995degree}, there exists a sequence $(u_j)_{j\in\N}\subset C^{\infty}(\S^{n-1};\S^{n-1})$ with the property that
\begin{equation*}\label{approximation_property}
u_j\underset{j\to \infty}{\longrightarrow} u \mathrm{\ strongly \ in} \ W^{1,n-1}(\S^{n-1};\S^{n-1}) \mathrm{\ \ and \ \ } \deg u_j=\deg u=1\ \ \forall j\in \mathbb{N}\,.
\end{equation*} 
Up to passing to a not-relabeled subsequence, we can without loss of generality also suppose that $u_j\rightarrow u$ and $\nabla_T u_j\rightarrow \nabla_T u$ pointwise $\mathcal{H}^{n-1}$-a.e. on $\S^{n-1}$, as $j\to\infty$. Since the maps $u_j$ are smooth and surjective, by the previous argument there exist $(\xi_j)_{j\in\N} \subset  \S^{n-1}$ and $(\lambda_j)_{j\in\N}
 \subset (0,1]$ so that for every $j\in \mathbb{N}$,
\begin{equation*}
\fint_{\S^{n-1}} u_j\circ \phi_{\xi_j,\lambda_j}=0\,.
\end{equation*}
Up to subsequences we can suppose further that $\xi_j\rightarrow\xi_0\in \S^{n-1}$ and $\lambda_j\rightarrow\lambda_0\in [0,1]$ as $j\to \infty$, thus $\phi_{\xi_j,\lambda_j}\to \phi_{\xi_0,\lambda_0}$ pointwise $\mathcal{H}^{n-1}$-a.e. on $\S^{n-1}$ and also weakly in $W^{1,n-1}(\S^{n-1};\S^{n-1})$.

In fact $\lambda_0\in (0,1]$, i.e.\ the Möbius transformations $(\phi_{\xi_j,\lambda_j})_{j\in\N}$ do not converge to the trivial map $\phi_{\xi_0,0}(x)\equiv\xi_0$. Indeed, suppose for the sake of contradiction that this were the case. Then $u_j\circ\phi_{\xi_j,\lambda_j}\to u(\xi_0)$ pointwise $\mathcal{H}^{n-1}$-a.e. and $|u_j\circ\phi_{\xi_j,\lambda_j}|\equiv1$, so we could use the Dominated Convergence Theorem to infer that
\begin{align*}
\begin{split}
u(\xi_0)&=\fint_{\S^{n-1}}u(\xi_0)=\lim_{j\to\infty}\fint_{\S^{n-1}} u_j\circ\phi_{\xi_j,\lambda_j}=0\,, \\
|u(\xi_0)|&=\fint_{\S^{n-1}} |u(\xi_0)|=\lim_{j\to\infty}\fint_{\S^{n-1}}|u_j\circ\phi_{\xi_j,\lambda_j}|=1\,,
\end{split}
\end{align*}
which is  a contradiction. 
Having justified that $\lambda_0\in (0,1] $, what we actually obtain by the Dominated Convergence Theorem is that 
\begin{equation*}
\fint_{\S^{n-1}} u\circ \phi_{\xi_0,\lambda_0}= 0\,,
\end{equation*}
which gives \eqref{eq: mean_value_0} in the general case.
\end{proof}

\subsection{Boundary integrals related to null-Lagrangians.}\label{null_Lagrangians}
In this subsection we recall some useful facts about Jacobian subdeterminants  and the boundary expressions of their integrals. Before doing so,    however,   we introduce some notation from linear algebra. 

Let $M\in \R^{n\times n}$ with its set of eigenvalues being labeled as $\{\mu_1,\dots,\mu_n\}$ (be them real or complex).   We then have  
\begin{align}\label{eq:id_sym_poly}
\det(I_n+M)=1+\sum_{k=1}^n \sigma_k(M)\,,
\end{align}
where $\sigma_k(M)$ denotes the $k$-th elementary symmetric polynomial in the eigenvalues of $M$:
\begin{equation}\label{eq:symmetric_polies}
\sigma_k(M):=\sum_{1\leq i_1<\dots<i_k\leq n}\mu_{i_1}\dots\mu_{i_k}\,.
\end{equation}
Note that the $k$-homogeneity of $\sigma_k$ implies the Euler identity
\begin{align}\label{eq:sigmak_hom}
\sigma_k(M)=\frac 1k  \sigma_k'(M):M\,,
\end{align}
where $\sigma_k'(M)\in\R^{n\times n}$ is the gradient of $\sigma_k$ (with respect to the $M$-variable)\,.
With this notation, we have the following.

\begin{lemma}\label{lemma_on_null_Lagrangians}
Let $w\in (W^{1,n-1}\cap L^\infty)(\S^{n-1};\R^n)$ and $W:\overline{B}_1\to \R^n$ be any regular extension such that $W|_{\S^{n-1}}=w$. Then,
\begin{equation}\label{eq:boundary_integrals}
\fint_{B_1} \det(I_n +\nabla W)\,\mathrm{d}x= 1+\sum_{k=1}^n \frac{n}{k}\fint_{\S^{n-1}}\langle w,[\sigma'_k(\nabla_T wP_T^t)]^tx\rangle\,\mathrm{d}\H^{n-1}\,,
\end{equation}
where $\sigma_k$ are as in \eqref{eq:symmetric_polies}. In particular, \eqref{eq:boundary_integrals} holds true for $w_h$, the component-wise harmonic extension of $w$ in $B_1$\,.
\end{lemma}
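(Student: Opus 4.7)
The plan is to expand the determinant via \eqref{eq:id_sym_poly} and prove separately, for each $k=1,\dots,n$, the termwise identity
\[
\fint_{B_1} \sigma_k(\nabla W)\,\d x \;=\; \frac{n}{k}\fint_{\S^{n-1}}\big\langle w,[\sigma'_k(\nabla_T w\,P_T^t)]^t x\big\rangle\d \H^{n-1}.
\]
Summing over $k$ and adding the constant contribution $\fint_{B_1}1\,\d x = 1$ then yields \eqref{eq:boundary_integrals}. Each individual identity splits into two independent ingredients: a bulk reduction of $\sigma_k(\nabla W)$ to a boundary integral by the null-Lagrangian structure of $\sigma_k$, and a boundary step that eliminates the normal derivative of $W$ from the resulting integrand.

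For the bulk step I combine the Euler identity \eqref{eq:sigmak_hom} with the classical null-Lagrangian conservation law $\sum_j\partial_j[\sigma'_k(\nabla W)]_{ij}\equiv 0$ for each $i$ and every smooth $W$, which is the statement that the Euler--Lagrange equation of $\sigma_k$ is trivially satisfied. Together they give
\[
k\,\sigma_k(\nabla W) \;=\; [\sigma'_k(\nabla W)]_{ij}\,\partial_j W^i \;=\; \partial_j\big([\sigma'_k(\nabla W)]_{ij}\, W^i\big) \;=\; \dv\big([\sigma'_k(\nabla W)]^t W\big)
\]
in $B_1$. Integrating over $B_1$, applying the divergence theorem with outward normal $x$ on $\S^{n-1}$, and using $|B_1|=\omega_n$ together with $|\S^{n-1}|=n\omega_n$, converts the bulk integral into a boundary integral of the form $\frac{n}{k}\fint_{\S^{n-1}}\big\langle w,\sigma'_k(\nabla W)\,x\big\rangle\,\d\H^{n-1}$.

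For the boundary step I use the pointwise decomposition on $\S^{n-1}$,
\[
\nabla W \;=\; \nabla_T w\,P_T^t + \partial_\nu W \otimes x,
\]
where $\partial_\nu W$ denotes the outward normal derivative. The crucial ingredient is then the pointwise rank-one identity
\[
\sigma'_k(A + v\otimes x)\,x \;=\; \sigma'_k(A)\,x \qquad \forall A\in\R^{n\times n},\;v,x\in\R^n,
\]
which kills off the contribution of $\partial_\nu W$. Its proof rests on the observation that the scalar function $t\mapsto\sigma_k(A + t\,v\otimes x)$ is \emph{affine} in $t$: writing $\sigma_k$ as a sum of principal $k$-minors, the matrix determinant lemma makes each principal minor of $A+tv\otimes x$ affine in $t$, and so does their sum. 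Differentiating the resulting identity $\sigma_k(A + tv\otimes x) = \sigma_k(A) + t\,v^t\sigma'_k(A)\,x$ in a second rank-one direction $u\otimes x$ (with the \emph{same} $x$) and comparing the result at $t=0$ and $t=1$ gives $u^t\sigma'_k(A+v\otimes x)\,x = u^t\sigma'_k(A)\,x$ for every $u\in\R^n$, whence the identity. Specialising to $A=\nabla_T w\,P_T^t$ (for which $Ax = 0$ because $P_T^t x = 0$) and $v=\partial_\nu W$ shows that $\sigma'_k(\nabla W)\,x = \sigma'_k(\nabla_T w\,P_T^t)\,x$ on $\S^{n-1}$.

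The main obstacle is the rank-one identity above; once it is established, the rest of the proof is the routine assembly just described. A small bookkeeping point is to reconcile the transpose $[\sigma'_k(\cdot)]^t x$ appearing in \eqref{eq:boundary_integrals} with the form $\sigma'_k(\cdot)\,x$ produced by the divergence-theorem computation, which is a matter of how the matrix derivative $\sigma'_k$ is indexed. The statement for the componentwise harmonic extension $w_h$ is then an immediate special case, since $w_h$ is a legitimate regular extension of $w$ whenever $w\in(W^{1,n-1}\cap L^\infty)(\S^{n-1};\R^n)$.
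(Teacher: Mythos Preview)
Your proof is correct and takes a genuinely different route from the paper. The paper avoids your rank-one identity entirely by introducing an auxiliary extension $\overline W$ that is radially constant near $\S^{n-1}$ (via a cut-off), so that $\partial_\nu\overline W=0$ and hence $\nabla\overline W=\nabla_T w\,P_T^t$ pointwise on the boundary. It first invokes the null-Lagrangian property of the full determinant to replace the given extension $W$ by $\overline W$, and only then performs the Euler--divergence computation, which lands directly on $\sigma_k'(\nabla_T w\,P_T^t)$ with no normal contribution to eliminate.

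Your approach instead keeps the arbitrary extension $W$ throughout and removes the normal derivative on the boundary via the algebraic identity $\sigma_k'(A+v\otimes x)\,x=\sigma_k'(A)\,x$, which you correctly derive from the rank-one affinity of $\sigma_k$. This is a perfectly valid alternative: it trades the paper's ``choose a convenient extension'' trick for a pointwise matrix lemma. The paper's route is marginally slicker because it needs no new algebraic fact beyond the standard null-Lagrangian structure, while your route has the advantage of never leaving the given extension and of isolating a reusable identity about $\sigma_k'$. Your remark about the transpose bookkeeping is well placed; the same ambiguity is present (and harmlessly glossed over) in the paper's own computation.
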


\begin{proof}
By a standard approximation argument we can without restriction assume for the proof that $u\in C^\infty(\S^{n-1};\R^n)$. 
Since for every $k=1,\dots,n$,  $\sigma_k$ is a linear combination of $k\times k$ minors,   $\sigma_k$    is a null Lagrangian, that is
\begin{align}\label{eq:sigmak_null}
\dv \sigma_k'(\nabla U)=0\quad\text{ for all } U\in C^\infty(B_1;\R^n)\,,
\end{align}
see e.g.\ \cite{Ball1981}.
We now consider a particular smooth extension
 $\overline W\colon \overline B_1\to\R^n$ of $w$
 which is constant in the radial direction.
 Explicitly, one may set
\begin{align*}
\overline W(y)=
\begin{cases}
\eta(y)w(\frac{y}{|y|}) &\text{for }y\neq 0,\\
0 &\text{for }y=0,
\end{cases}
\end{align*}
where $\eta\in C^\infty(\overline B_1;\R_{ +})$ is any smooth cut-off satisfying
$\mathbf 1_{\{|y|\geq 1/2\}} \leq \eta(y) \leq \mathbf 1_{\{|y|\geq 1/4\}}$.
Since the radial derivative of $\overline W$ vanishes on $\mathbb S^{n-1}$, 
 for every $i,j = 1,\dots,n$ we have   
\begin{equation}\label{full_gradient_on_S_n-1}
(\nabla \overline W)_{ij} 
=\sum_{m=1}^{n-1}\langle\nabla_T w^i,\tau_m\rangle\langle e_j,\tau_m\rangle= (\nabla_T w P_T^t)_{ij}
\quad\H^{n-1}\text{-a.e. on }\S^{n-1}.
\end{equation}
%
%
%
%
Using the fact that the Jacobian determinant is a null-Lagrangian, \eqref{eq:id_sym_poly}, \eqref{eq:sigmak_hom}, \eqref{eq:sigmak_null} and \eqref{full_gradient_on_S_n-1}, for every extension $W:\overline{B}_1\to \R^n$ of $w$, we obtain

\begin{align*}
\begin{split}
\fint_{B_1} \det(I_n +\nabla W)\,\mathrm{d}x
&= \fint_{B_1} \det(I_n +\nabla \overline W)\,\mathrm{d}x
\\ &= 1+\sum_{k=1}^n \fint_{B_1}\sigma_k(\nabla \overline W)\,\mathrm{d}x 
\\
&=1+\sum_{k=1}^n \frac{1}{k} \fint_{B_1}\sigma'_k(\nabla \overline W)\colon\nabla \overline W\,\mathrm{d}x
\\
& =1+\sum_{k=1}^n \frac{1}{k} \fint_{B_1}\mathrm{div}(\sigma'_k(\nabla \overline W)\overline W)  \,\mathrm{d}x 
\\
&= 1+\sum_{k=1}^n \frac{n}{k} \fint_{\S^{n-1}}\langle\sigma'_k(\nabla \overline W)\overline W,x\rangle\,\mathrm{d}\H^{n-1}\\
&=1+\sum_{k=1}^n \frac{n}{k}\fint_{\S^{n-1}}\langle\sigma'_k(\nabla_T wP_T^t)w,x\rangle\,\mathrm{d}\H^{n-1}\,,
\end{split}
\end{align*}
  which proves exactly \eqref{eq:boundary_integrals}. \qedhere
\end{proof}

\begin{remark}\label{1st_last_symmetric_polynomials}
\normalfont We observe that $\sigma_1'(M)=I_n \ \forall M\in \R^{n\times n}$, so the term corresponding to $k=1$ in the right hand side of \eqref{eq:boundary_integrals} is equal to $n\fint_{\mathbb S^{n-1}} \langle w, x\rangle\,.$ Similarly, for $k=n$ it is clear that
\begin{equation}\label{last_term_in_the_expansion}
\fint_{\S^{n-1}}\langle w,[\sigma'_n(\nabla_T wP_T^t)]^tx\rangle\,\mathrm{d}\H^{n-1}=\fint_{B_1}\mathrm{det}\nabla W
=\fint_{\S^{n-1}}\bigg\langle w,\bigwedge_{i=1}^{n-1} \partial_{\tau_i}w\bigg\rangle\,\mathrm{d}\H^{n-1}\,,
\end{equation}
while for every $k\in\{2,\dots,n-1\}$, since $\sigma_k'(M)$ is a $(k-1)$-homogeneous polynomial in the entries of $M$, for every $w\in W^{1,n-1}(\S^{n-1};\R^n)$ we have the trivial estimate 
\begin{equation}\label{eq:trivial_estimate_for_intermediate_terms}
\frac{n}{k}\bigg|\fint_{\S^{n-1}}\langle w,[\sigma'_{ k}(\nabla_T wP_T^t)]^tx\rangle\,\mathrm{d}\H^{n-1}\bigg|\leq C_{n,k}
\fint_{\S^{n-1}}|w||\nabla_Tw|^{k-1}\,\mathrm{d}\H^{n-1}\,,
\end{equation}
where $C_{n,k}>0$ is a constant depending only on $n$ and $k$\,.
\end{remark}

\section{Proof of Theorem \ref{main_thm}.}\label{sec:proof}
In this section we prove Theorem \ref{main_thm}. As we noted in the Introduction, only the case $n\geq 4$ is new,  and is the case of study here,   the proof being substantially simpler for $n=3$  (cf. \cite{hirsch2022mobius2}). For the reader's convenience we split the proof in several steps.\\ 

\begin{step}{0. Reduction to maps with small deficit}
We start with the standard observation that, for every $u\in \mathcal{A}_{\S^{n-1}}$  (recall \eqref{admissible_class_of_maps}) , we have
\begin{equation}\label{eq:trivial_estimate_for_large_deficit}
\fint_{\S^{n-1}}|\nabla_Tu-P_T|^{n-1}\lesssim \fint_{\S^{n-1}}|\nabla_Tu|^{n-1}+1\lesssim \delta_{n-1}(u)+1\,,
\end{equation}
and therefore without restriction we may assume that 
\begin{equation}\label{eq:apriori_small_deficit}
0\leq \delta_{n-1}(u)\leq \delta_0
\,.
\end{equation}	
Indeed, if $\delta_{n-1}(u)>\delta_0$, where $\delta_0:=\delta_0(n)>0$ is a small but fixed dimensional constant that will be suitably chosen later, in view of \eqref{eq:trivial_estimate_for_large_deficit} we see that \eqref{conf_S_S_quantitative} trivially holds with $\phi:=\mathrm{id}_{\S^{n-1}}$. \medskip
\end{step}

\begin{step}{1. Reduction to maps with zero mean value} Letting $\psi\in \Mob(\mb S^{n-1})$ be the map provided by Lemma \ref{lemma:topological}, we take $\tilde u:=u\circ\psi$. Since both the deficit and the topological degree are conformally invariant, we see that $\tilde u\in \mathcal{A}_{\S^{n-1}}$ and
\begin{equation}\label{properties_of_tilde_u}
\delta_{n-1}(\tilde u)=\delta_{n-1}(u)\,, \ \ \deg \tilde u=\deg u=1\,, \ \mathrm{ and\ \ } \fint_{\S^{n-1}} \tilde u =0\,.
\end{equation} 
  Hence, if we prove \eqref{conf_S_S_quantitative} for $\tilde u$, then it automatically also holds for $u$.  
\end{step}	\\[2pt]

\begin{step}{2(a). Local estimate in a $W^{1,n-1}$-neighbourhood of the identity}
In this step we prove \eqref{conf_S_S_quantitative}   for $\tilde u$,   under the additional assumption that
\begin{equation}\label{eq:close_to_identity}
\fint_{\S^{n-1}} |\nabla_T \tilde u-P_T|^{n-1}\leq \theta
\,,
\end{equation}	
where $\theta:=\theta(n)>0$ is a sufficiently small but fixed dimensional constant that will be suitably chosen later.

For $R\in \SO(n)$ arbitrary,  consider the map $v_R\in  (W^{1,n-1}\cap L^\infty) (\S^{n-1};\R^n)$ defined via
\begin{equation}\label{def: v_R_map}
v_R(x):=\tilde u(x)-Rx\,.
\end{equation}
In particular, since $  \tilde u $ is $\S^{n-1}$-valued, \eqref{def: v_R_map} implies that for $\H^{n-1}$-a.e.\ $x\in \S^{n-1}$ we have
\[1=|\tilde u(x)|^2=|v_R(x)+Rx|^2=|v_R(x)|^2+2\langle v_R(x),Rx\rangle+1\,, \]
or, equivalently,
\begin{equation}\label{eq: pointwise_identity_for_v_R}
\langle v_R,Rx\rangle=-\frac{1}{2}|v_R|^2\, \quad \H^{n-1}\text{-a.e.\ on } \S^{n-1}\,.
\end{equation}

The next ingredient we need is a pointwise inequality for vectors derived in \cite[Lemma 2.1(ii)]{figalli2022sobolev}.  For the reader's convenience, we repeat its statement here in a weaker form than the original one, that   is more   adequate for our purposes.  

\smallskip

\begin{lemma}\label{lemma:figalli}
Let $m\in \N$, $p\geq 2$. For every $\kappa  \in (0,1) $ there exists a  $c_0:=c_0(p,\kappa) >0 $ so that, for every two vectors $X,Y\in \R^m$, we have
\begin{align}
\label{eq: vector_algebraic_inequality}
\begin{split}
|X+Y|^p & \geq |X|^p+ p|X|^{p-2}\langle X,Y\rangle   +\frac{(1-\kappa)p}{2}|X|^{p-2}|Y|^2+c_0|Y|^p 
\,.
 \end{split}
\end{align}
\end{lemma}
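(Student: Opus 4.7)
The plan is to reduce the inequality to a two-parameter scalar problem via homogeneity, and then verify it by case analysis according to the relative size of $|X|$ and $|Y|$. Both sides of \eqref{eq: vector_algebraic_inequality} are jointly $p$-homogeneous in $(X,Y)$ and depend only on $|X|$, $|Y|$, and $\langle X,Y\rangle$, so the case $X=0$ is trivial (any $c_0\leq 1$ works). Otherwise I would normalize $|X|=1$ and write $Y=r(\cos\theta\, e_1+\sin\theta\, e_2)$ with $e_1=X$, $r=|Y|\geq 0$, $\theta\in[0,\pi]$, reducing the claim to the scalar inequality
\[
(1+2r\cos\theta+r^2)^{p/2}\geq 1+pr\cos\theta+\tfrac{(1-\kappa)p}{2}r^2+c_0\, r^p
\qquad\text{for all } r\geq 0,\ \theta\in[0,\pi].
\]

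The proof would split the range of $r$ into three regimes. For $r\in[0,r_0]$ with $r_0=r_0(p,\kappa)\leq 1$ small, I would Taylor-expand the left-hand side at $r=0$, obtaining
\[
(1+2r\cos\theta+r^2)^{p/2}=1+pr\cos\theta+\tfrac{p}{2}r^2+\tfrac{p(p-2)}{2}r^2\cos^2\theta+O_p(r^3);
\]
the gap $\tfrac{p}{2}-\tfrac{(1-\kappa)p}{2}=\tfrac{\kappa p}{2}$, combined with the nonnegative curvature term (since $p\geq 2$), dominates both the cubic remainder and the quantity $c_0 r^p\leq c_0 r^2$ (valid because $r\leq 1$ and $p\geq 2$), provided $c_0$ and $r_0$ are chosen small enough in terms of $(p,\kappa)$. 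For $r\geq R$ with $R=R(p,\kappa)$ large, the expansion $(1+2r\cos\theta+r^2)^{p/2}=r^p+pr^{p-1}\cos\theta+O_p(r^{p-2})$ shows that the excess of the left-hand side over the right-hand side is $(1-c_0)r^p+O_p(r^{p-1})$, which is positive whenever $c_0<1$ and $R$ is sufficiently large. The compact intermediate range $r\in[r_0,R]$ would be handled by a continuity and compactness argument applied to
\[
\Psi(r,\theta):=(1+2r\cos\theta+r^2)^{p/2}-1-pr\cos\theta-\tfrac{(1-\kappa)p}{2}r^2;
\]
if $\Psi>0$ pointwise on the compact set $[r_0,R]\times[0,\pi]$, one obtains a uniform minimum $c'>0$ and then takes $c_0\leq c'/R^p$.

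The main obstacle is the pointwise strict positivity of $\Psi$ on the intermediate compact set. By convexity of $Z\mapsto|Z|^p$ for $p\geq 2$, one has the Bregman-divergence inequality $|X+Y|^p\geq|X|^p+p|X|^{p-2}\langle X,Y\rangle$; upgrading this to include the extra quadratic term $\tfrac{(1-\kappa)p}{2}|X|^{p-2}|Y|^2$ would be done via Taylor's formula with integral remainder applied to $f(t)=|X+tY|^p$, using that
\[
f''(t)=p(p-2)|X+tY|^{p-4}\langle X+tY,Y\rangle^2+p|X+tY|^{p-2}|Y|^2\geq p|X+tY|^{p-2}|Y|^2.
\]
The delicate point is the configuration $Y=-2X$, i.e.\ $(r,\theta)=(2,\pi)$, where $|X+Y|=|X|$ and the $\kappa=0$ version of the inequality becomes an equality in the quadratic term; the positive slack $\tfrac{\kappa p}{2}|X|^{p-2}|Y|^2$ is precisely what converts such a tight pointwise inequality into a uniform strict one, allowing the compactness argument to close.
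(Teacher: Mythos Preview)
The paper does not give its own proof of this lemma; it simply quotes the statement as a weakened form of \cite[Lemma~2.1(ii)]{figalli2022sobolev}. Your direct approach via homogeneity reduction to a two-parameter scalar inequality and a three-regime split in $r=|Y|/|X|$ is natural, and the small-$r$ and large-$r$ analyses are correct.

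There is, however, a genuine gap in the intermediate regime. You propose to establish the pointwise strict inequality $\Psi>0$ via Taylor's formula together with the lower bound $f''(t)\geq p|X+tY|^{p-2}|Y|^2$, but this bound is too weak: with $|X|=1$ and $Y=-X$ (i.e.\ $r=1$, $\theta=\pi$) it yields
\[
\Psi\;\geq\; p\Big(\int_0^1(1-t)^{p-1}\,\d t-\tfrac{1-\kappa}{2}\Big)\;=\;1-\tfrac{(1-\kappa)p}{2},
\]
which is negative as soon as $p>2/(1-\kappa)$, whereas the true value $\Psi(1,\pi)=\tfrac{p-2+\kappa p}{2}$ is positive. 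The curvature term $p(p-2)|X+tY|^{p-4}\langle X+tY,Y\rangle^2$ that you discard is therefore essential and cannot be dropped. A clean repair is to minimize $\Psi(r,\theta)$ over $\theta$ explicitly for each fixed $r>0$: setting $a=\cos\theta$, one has $\partial_a\Psi=pr\big[(1+2ar+r^2)^{p/2-1}-1\big]$, so for $p>2$ the unique interior critical point is at $a=-r/2$ (valid when $r\leq 2$), where $1+2ar+r^2=1$ and hence $\Psi=\tfrac{\kappa p}{2}r^2>0$. For $r>2$ the minimum over $a\in[-1,1]$ is at $a=-1$, where $\Psi=(r-1)^p-1+pr-\tfrac{(1-\kappa)p}{2}r^2$; differentiating in $r$ and using $(r-1)^{p-1}\geq r-1$ shows this is nondecreasing for $r\geq 2$ with value $2\kappa p>0$ at $r=2$. (The case $p=2$ is immediate since then $\Psi\equiv\kappa r^2$.) With $\Psi>0$ thus established for all $r>0$, your compactness step closes exactly as you describe.
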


\smallskip
We apply \eqref{eq: vector_algebraic_inequality} with 
\[m:=n(n-1)\,, \quad p:=n-1\geq 2\,, \quad X:=RP_T\,, \quad Y:=\nabla_T v_R\,.\]
 Using \eqref{def: v_R_map} and the fact that $|RP_T|^2=|P_T|^2=n-1$\,, we deduce that $\H^{n-1}$-a.e. on $\S^{n-1}$ we have 
\begin{align*}\label{eq:pointwise_ineq_for_v_R}	
\begin{split}
|\nabla_T \tilde u|^{n-1}&\geq |RP_T|^{n-1}+(n-1)|RP_T|^{n-3}RP_T\colon \nabla_Tv_R+c_0|\nabla_Tv_R|^{n-1}\\
& \quad + \frac{1-\kappa}{2}(n-1)|RP_T|^{n-3}|\nabla_Tv_R|^2 \\
&=(n-1)^{\frac{n-1}{2}}\bigg(1+RP_T\colon\nabla_T v_R +\frac{1-\kappa}{2}|\nabla_T v_R|^2\bigg)+c_0|\nabla_T v_R|^{n-1}\, .
\end{split}
\end{align*}
Integrating the last inequality over $\S^{n-1}$, and after some simple algebraic manipulations (and redefining the value of the constant $c_0>0$), we find
\begin{equation}\label{eq:deficit_ineq_for_v_R}	
\delta_{n-1}(\tilde u)\geq \frac{1-\kappa}{2}\fint_{\S^{n-1}}|\nabla_T v_R|^2+\fint_{\S^{n-1}}RP_T\colon \nabla_Tv_R+c_0\fint_{\S^{n-1}}|\nabla_Tv_R|^{n-1}\,.
\end{equation}
Writing in the ambient space coordinates $v_R=(v_R^1,\dots,v_R^n)$, an integration by parts on $\S^{n-1}$, together with the fact that $\lambda_{n,1}=n-1$  (cf.\ \eqref{def: eigenvalues}),  yields
\begin{align}\label{eq:linear_term_becomes_quadratic}
\begin{split}
\fint_{\S^{n-1}}RP_T\colon \nabla_T v_R&= \sum_{k=1}^n \fint_{\S^{n-1}}\langle \nabla_T(Rx)^k, \nabla_T v_R^k\rangle \\
& =\sum_{k,\ell=1}^n R_{k\ell}\fint_{\S^{n-1}}\langle \nabla_Tx^\ell, \nabla_T v_R^k\rangle\\
&  = \sum_{k,\ell=1}^n R_{k\ell}\fint_{\S^{n-1}}(-\Delta_{\S^{n-1}}x^\ell) v_R^k
\\
&=(n-1)\fint_{\S^{n-1}}\langle Rx, v_R\rangle   =-\frac{n-1}{2}\fint_{\S^{n-1}}|v_R|^2\,,
\end{split}
\end{align}
where in the last equality we used \eqref{eq: pointwise_identity_for_v_R}.  Plugging \eqref{eq:linear_term_becomes_quadratic} in \eqref{eq:deficit_ineq_for_v_R},  we obtain
\begin{equation}\label{eq:new_deficit_ineq_for_v_R}	
\delta_{n-1}(\tilde u)\geq \frac{1-\kappa}{2}\bigg(\fint_{\S^{n-1}}|\nabla_T v_R|^2-\frac{n-1}{1-\kappa}\fint_{\S^{n-1}}|v_R|^2\bigg)+c_0\fint_{\S^{n-1}}|\nabla_Tv_R|^{n-1}\,.
\end{equation}

Noting that $v_R\in W^{1,n-1}(\S^{n-1};\R^n)\subset W^{1,2}(\S^{n-1};\R^n)$, we can consider the decomposition of $v_R$ in spherical harmonics.  Recalling \eqref{projections_in_spherical_harmonics}, by \eqref{properties_of_tilde_u}, the fact that $\fint_{\S^{n-1}}Rx=0$   and \eqref{eq:zeroproj},   we have
\begin{equation}\label{eq:mean_value_of_v_R_zero}
\Pi_{n,0}(v_R) = \fint_{\S^{n-1}}v_R=0\,,
\end{equation}
while \eqref{eq:firstproj} yields
\[\Pi_{n,1} (v_R) =\nabla (v_R)_h(0)x=(\nabla \tilde u_h(0)-R)x\,.\]
By the orthogonality of the decomposition \eqref{eq:decomsphericalharm} in $W^{1,2}(\S^{n-1};\R^n)$, the improved Poincar\' e inequality \eqref{eq:secondpoincare}   and \eqref{eq:mean_value_of_v_R_zero}, we have:
\begin{align}
\label{eq: improved_Poincare}
\fint_{\S^{n-1}}|\nabla_Tv_R|^2&=\fint_{\S^{n-1}}|\nabla_T(v_R-\Pi_{n,1}(v_R))|^2+\fint_{\S^{n-1}}|\nabla_T\Pi_{n,1}(v_R)|^2 \nonumber\\
&\geq 2n\fint_{\S^{n-1}}|v_R-\Pi_{n,1}(v_R)|^2+\fint_{\S^{n-1}}|(\nabla \tilde u_h(0)-R)P_T|^2\\
&= 2n\fint_{\S^{n-1}}|v_R|^2+\fint_{\S^{n-1}}|(\nabla \tilde u_h(0)-R)P_T|^2-2n\fint_{\S^{n-1}}|(\nabla\tilde u_h(0)-R)x|^2 \nonumber \,.
\end{align} 
Note that 
\[\fint_{\S^{n-1}}x_kx_{k'}=\frac{\delta^{kk'}}{n}\,\quad \forall k,k'=1,\dots,n\,;\]
thus, for every  $A\in \R^{n\times n}$,  we can compute
\begin{align}\label{eq:int_A_P_T}
\begin{split}
\fint_{\S^{n-1}}|AP_T|^2&=\sum_{i,j=1}^{n,n-1}\fint_{\S^{n-1}}\bigg(\sum_{k=1}^nA_{ik}(P_T)_{kj}\bigg)^2\\
& =\sum_{i,j=1}^{n,n-1}\sum_{k,k'=1}^{n}A_{ik}A_{ik'}\fint_{\S^{n-1}}\langle e_k,\tau_j\rangle\langle e_{k'},\tau_j\rangle  \\
&=\sum_{i,k,k'=1}^n A_{ik}A_{ik'}\fint_{\S^{n-1}}\bigg\langle \sum_{j=1}^{n-1}\langle e_k,\tau_j\rangle\tau_j,e_{k'} \bigg\rangle\\
& =\sum_{i,k,k'=1}^n A_{ik}A_{ik'}\fint_{\S^{n-1}}\langle e_k-x_kx,e_{k'}\rangle \\
& =\big(1-\tfrac{1}{n}\big)\sum_{i,k,k'=1}^n A_{ik}A_{ik'}\delta^{kk'}=\frac{n-1}{n}|A|^2\,,
\end{split}
\end{align}
and
\begin{align}\label{eq:int_Ax}
\begin{split}
\fint_{\S^{n-1}}|Ax|^2&=\sum_{i=1}^{n}\fint_{\S^{n-1}}\bigg(\sum_{k=1}^n A_{ik}x_k\bigg)^2\\
& =\sum_{i,k,k'=1}^{n}A_{ik}A_{ik'}\fint_{\S^{n-1}}x_kx_{k'}=\sum_{i,k,k'=1}^n A_{ik}A_{ik'}\frac{\delta^{kk'}}{n}=\frac{1}{n}|A|^2\,.
\end{split}
\end{align}
In view of \eqref{eq:int_A_P_T} and \eqref{eq:int_Ax}  (for $A:=\nabla \tilde u_h(0)-R$),   \eqref{eq: improved_Poincare} can be rewritten as
\medskip
\begin{equation}\label{eq:rewritten_improved_Poincare}
\fint_{\S^{n-1}}|\nabla_Tv_R|^2  \geq   2n\fint_{\S^{n-1}}|v_R|^2-\frac{  n+1 }{n}|\nabla \tilde u_h(0)-R|^2\,.
\end{equation}
\medskip
In particular, plugging  \eqref{eq:rewritten_improved_Poincare} in \eqref{eq:new_deficit_ineq_for_v_R}, we get
\smallskip
\begin{align*}
\delta_{n-1}(\tilde u)&\geq \frac{1-\kappa}{2}\left(c_{n,\kappa}\fint_{\S^{n-1}}|v_R|^2-\frac{  n+1 }{n}|\nabla \tilde u_h(0)-R|^2\right)+c_0\fint_{\S^{n-1}}|\nabla_Tv_R|^{n-1}\,,
\end{align*}
\medskip
where $c_{n,\kappa}:=2n-\frac{n-1}{1-\kappa}$. Choosing $\kappa:= \kappa_n= \frac{n+1}{2n} \in (0,1) $ we have $c_{n,\kappa}=0$, and so
$$\delta_{n-1}(\tilde u) \geq -\frac{  (n^2-1)  
}{  4n^2  
}|\nabla \tilde u_h(0)-R|^2+c_0\fint_{\S^{n-1}}|\nabla_Tv_R|^{n-1}\,.
$$
Recalling \eqref{def: v_R_map}, we deduce that
\begin{equation}\label{eq:1_key_estimate_for_stability}
\fint_{\S^{n-1}}|\nabla_T\tilde u-RP_T|^{n-1}\leq c_n \bigg(\delta_{n-1}(\tilde u)+|\nabla\tilde u_h(0)-R|^2\bigg)\,,
\end{equation}
where $c_n >0 $ is a dimensional constant. Recall that, in the above argument, $R\in \SO(n)$ is arbitrary. Therefore, \eqref{eq:1_key_estimate_for_stability} and \eqref{properties_of_tilde_u}   directly   imply \eqref{conf_S_S_quantitative} 
provided that we have
\begin{equation}\label{eq:dist_of_linear_part_from_SO(n)}
\mathrm{dist}^2(\nabla \tilde u_h(0);\SO(n))\lesssim \delta_{n-1}(\tilde u)\,.
\end{equation} 
  Indeed, if this is the case, then we could apply \eqref{eq:1_key_estimate_for_stability} for $R\in \SO(n)$ such that 
$$|\nabla\tilde u_h(0)-R|=\mathrm{dist}(\nabla \tilde u_h(0);\SO(n))\,,$$
and obtain \eqref{conf_S_S_quantitative} with $\phi =R\psi^{-1}\in \Mob(\S^{n-1})$. The verification of \eqref{eq:dist_of_linear_part_from_SO(n)} will be provided in the next (sub)step.  
\end{step}\\[2pt]

\begin{step}{2(b). Verification of \eqref{eq:dist_of_linear_part_from_SO(n)}} For brevity, let us  here  set 
\begin{equation}\label{eq: A_matrix}
A:=\nabla\tilde u_h(0)\,.
\end{equation}
By the mean-value property   of harmonic functions,   \eqref{eq:estimateharm},  \eqref{eq:close_to_identity} and Jensen's inequality, 
\begin{align}\label{nabla_at_0_almost_id}
\begin{split}	
|A-I_n|^2&=\left|\fint_{B_1} \nabla \tilde u_h-I_n\right|^2 \leq \fint_{B_1} |\nabla \tilde u_h-I_n|^2\\
& \leq\frac{n}{n-1}\fint_{\S^{n-1}}|\nabla_T\tilde u-P_T|^2\\
&\leq \frac{n}{n-1}\bigg(\fint_{\S^{n-1}}|\nabla_T\tilde u-P_T|^{n-1}\bigg)^{\frac{2}{n-1}}\lesssim \theta^{\frac{2}{n-1}}\ll 1\,,
\end{split}
\end{align}
and by choosing $\theta\in(0,1)$ sufficiently small, we can take $A$ to be invertible and such that 
\begin{equation}\label{matrixinversedeterminantestimates}
|A|^2, |A^{-1}|^2\ \in [n-1,n+1]\ \ \ \mathrm{and \ \ \ } \mathrm{det}A\in \left[\tfrac{1}{2},\tfrac{3}{2}\right]\,.
\end{equation}
By the polar decomposition, $A=R_0 \,   U_A $ with $R_0\in \SO(n)$ and $  U_A:=\sqrt{A^tA} $ symmetric positive-definite.  If we label the eigenvalues of $  U_A $ as $0<\alpha_1\leq\dots\leq \alpha_n$ and set 
\begin{equation}\label{def: lambdas}
\lambda_i:=\alpha_i-1\,, \qquad \Lambda^2:=\sum_{i=1}^n\lambda_i^2\quad  (\Lambda>0) \,,
\end{equation}
 we have
\begin{equation}\label{L2_small_conformal_s_2}
\Lambda^2 =|U_A-I_n|^2=|A-R_0|^2=\mathrm{dist}^2\big(A;\SO(n)\big)\leq|A-I_n|^2\lesssim\theta^{\frac{2}{n-1}}\ll 1\,.
\end{equation}
Next we define $ w(x):=A^{-1}(\tilde u(x)-Ax)$, so that
\begin{align}\label{eq:tildeu_w}
\tilde u(x)= A (x + w(x))\,.
\end{align}
Note that \eqref{eq:1_key_estimate_for_stability} and \eqref{matrixinversedeterminantestimates} imply
\begin{align}\label{eq:nabla_w_Ln-1}
\fint_{\S^{n-1}}|\nabla_T w|^{n-1}\lesssim \fint_{\mb S^{n-1}} |\n_T \tilde u -   R_0  P_T|^{n-1} +  | R_0  -A|^{n-1}    \lesssim   
\delta_{n-1}(\tilde u)+\Lambda^2
\,.
\end{align}
Moreover, dropping the last nonnegative term in \eqref{eq:new_deficit_ineq_for_v_R} and letting $\kappa\to 0$ therein, we have
\begin{align*}
2\delta_{n-1}(\tilde u)
&\geq 
\fint_{\S^{n-1}}|\nabla_T v_R|^2 -(n-1)\fint_{\S^{n-1}}|v_R|^2 \\
& =\fint_{\S^{n-1}}|\nabla_T(v_R-\Pi_{n,1} ( v_R ))|^2-(n-1)\fint_{\S^{n-1}}
|v_R-\Pi_{n,1} ( v_R )|^2 \\
&\geq \left(1-\frac{n-1}{2n}\right)\fint_{\S^{n-1}}|\nabla_T(v_R-\Pi_{n,1} ( v_R ) )|^2,
\end{align*}
where the last inequality follows again from \eqref{eq:secondpoincare}.
In terms of $w$, by  \eqref{matrixinversedeterminantestimates}, and the fact that $\Pi_{n,1}( \tilde u )  (x)=Ax$ (cf.\ \eqref{eq:firstproj}   and \eqref{eq: A_matrix} ) and  $\Pi_{n,1}(R_0x)=R_0x$,  the last inequality   implies
\begin{align}\label{eq:nabla_w_L2}
\fint_{\S^{n-1}}|\nabla_T w|^{2}\leq c_{n} \delta_{n-1}(\tilde u)\,.
\end{align}

Next, we wish to expand the identity
\begin{align}
\label{eq:expanddeg}
1=\deg(\tilde u)=\fint_{B_1}\det(\nabla \tilde u_h)\,\mathrm{d}x=\det A\fint_{B_1} \det(I_n +\nabla w_h)\,\mathrm{d}x\,,
\end{align}
in terms of $w$.   By Lemma 
\ref{lemma_on_null_Lagrangians}, we obtain    
\begin{align}
\label{eq:chaindeg}
\begin{split}
1
&
= \det A\fint_{B_1} \det(I_n +\nabla w_h)=
\det A 
\left( 1+\sum_{k=1}^n \frac{n}{k}\fint_{\S^{n-1}}\langle \sigma'_k(\nabla_T w  P_T^t )w,x \rangle
 \right).
 \end{split}
\end{align}
  By Remark \ref{1st_last_symmetric_polynomials}, the term corresponding to $k=1$ in the right hand side of \eqref{eq:chaindeg}     equals  
\begin{align*}
n\fint_{\mathbb S^{n-1}} \langle w, x\rangle =n\fint_{\mathbb S^{n-1}} \langle (\tilde u(x)-Ax),  A^{-t}  x \rangle =0\,,
\end{align*}
  where we used   again   \eqref{eq: A_matrix} and \eqref{eq:firstproj}\,.   
Hence \eqref{eq:chaindeg} becomes
\begin{equation}\label{eq: det_expansion_1}
1
=
\det A 
+\det A \sum_{k=2}^n \frac nk \fint_{\S^{n-1}}\langle \sigma'_k(\nabla_T w  P_T^t )w,x \rangle\,.
\end{equation}
Recalling the notation   $U_A=\sqrt{A^tA}$, setting for notational convenience
\begin{equation}\label{def:sigma_k_lambda}
\sigma_k(\lambda):=\sigma_k(U_A-I_n)\, \quad \forall k=1,\dots,n\,,
\end{equation}
 and using   also the algebraic identity
   $$\det A =\det  U_A= \prod_{k=1}^n (1+\lambda_k)=1+\sum_{k=1}^n \sigma_k(  \lambda 
)\,,$$ 
 (cf.\   \eqref{def: lambdas}   and \eqref{eq:id_sym_poly}),   identity \eqref{eq: det_expansion_1}   can be rewritten as   
\begin{align*}
0
&=\sum_{k=1}^n\sigma_k(  \lambda 
)  +\det A \sum_{k=2}^n \frac nk \fint_{\S^{n-1}}\langle\sigma'_k(\nabla_T w  P_T^t )w,x \rangle\,.
\end{align*}
Recalling   that, by \eqref{def: lambdas},  $\Lambda^2=\sum_{k=1}^n\lambda_k^2 =\sigma_1(  \lambda 
)^2 -2\sigma_2(  \lambda 
)$, we deduce
\begin{align}\label{eq:Lambda_identity}
\frac 12\Lambda^2
=  \sigma_1(\lambda) +\frac{\sigma_1(\lambda)^2}{2}   
+\sum_{k=3}^n \sigma_k(  \lambda 
)
+\det A \sum_{k=2}^n \frac nk \fint_{\S^{n-1}}\langle\sigma'_k(\nabla_T w  P_T^t )w,x  \rangle\,.
\end{align}

Next we estimate all terms in the right-hand side of   \eqref{eq:Lambda_identity}.
First note that
\begin{equation}\label{lambda_identity}
  \sigma_1(\lambda) +\frac{\sigma_1(\lambda)^2}{2}  
=\frac{|A|^2-n}{2}\,.
\end{equation}
By the mean value property of harmonic functions, Jensen's inequality and \eqref{eq:estimateharm}, we have
\begin{align*}
|A|^2 & =|\nabla \tilde u_h(0)|^2
=\left| \fint_{B_1} \nabla \tilde u_h \right|^2
\leq \fint_{B_1}|\nabla \tilde u_h|^2 \leq \frac{n}{n-1}\fint_{\S^{n-1}}|\nabla_T \tilde u|^2,
\end{align*}
and again by Jensen's inequality,
\begin{align*}
|A|^2\leq n\left[\fint_{\S^{n-1}}\left(\frac{|\nabla_T \tilde u|^2}{n-1}\right)^{\frac{n-1}{2}}\right]^{\frac{2}{n-1}}
=n\bigg(1+\delta_{n-1}(\tilde u)\bigg)^{\frac{2}{n-1}}
\leq n +\frac{2n}{n-1}\delta_{n-1}(\tilde u)\,,
\end{align*}
  where we used the inequality $(1+t)^\alpha\leq 1+\alpha t$\,, which holds true for every $t\geq 0$ and 
$0\leq \alpha\leq 1$. 
 In particular, \eqref{lambda_identity} and the above inequality yield %
\begin{align}\label{eq:sigma1+sigma1^2/2}
\sigma_1(\lambda)+\frac{\sigma_1(\lambda)^2}{2}   
=\frac{|A|^2-n}{2}
\leq \frac{n}{n-1}\delta_{n-1}(\tilde u)\,,
\end{align}
  which takes care of the first two terms in the right-hand side of \eqref{eq:Lambda_identity}.
Since $\sigma_k$ is $k$-homogeneous and $\Lambda$ is small by \eqref{L2_small_conformal_s_2}, the third term in the right-hand side of \eqref{eq:Lambda_identity} is estimated by  (recall \eqref{def:sigma_k_lambda}),  
\begin{align}\label{eq:sum_sigmak_3}
\sum_{k=3}^n\sigma_k(  \lambda 
)  \lesssim |U_A-I|^3=\Lambda^3   \lesssim \theta^{\frac{1}{n-1}}
\Lambda^2\,.
\end{align}
For the last sum in \eqref{eq:Lambda_identity}, we consider first all terms but the last. Using that $\sigma_k'$ is $(k-1)$-homogeneous,   by \eqref{eq:trivial_estimate_for_intermediate_terms}    
we have
\begin{align}\label{eq:interpolation_1}
\begin{split}
\sum_{k=2}^{n-1}\frac{n}{k}\fint_{\S^{n-1}}\langle\sigma_k'(\nabla_Tw  P_T^t )  w,x \rangle
&
  \lesssim  
\sum_{k=2}^{n-1}\fint_{\S^{n-1}}|\nabla_T w|^{k-1}|w|
\\
&
  \lesssim  \fint_{\S^{n-1}}|w|\left(|\nabla_T w|+ |\nabla_T w|^{n-2}\right)\,,
\end{split}
\end{align}
  where in the last inequality we used the fact that 
\[1\leq k-1\leq n-2 \implies |Z|^{k-1}\leq |Z|+|Z|^{n-2} \quad \forall Z\in \R^m\,.\] 
 
  Note that $|w| |\nabla_T w|\leq |w|^2+|\nabla_Tw|^2$ and  
  $ \|w\|_{L^\infty(\S^{n-1})} \lesssim 1 
$   by \eqref{matrixinversedeterminantestimates} and \eqref{eq:tildeu_w}.   Moreover, since we are here considering the case $n\geq 4$,   for any $\eps>0$ there holds
\[ |\nabla_T w|^{n-2}\leq   \eps^{4-n}   
|\nabla_T w|^2 +\eps |\nabla_T w|^{n-1}\,,\]
 as can be checked by distinguishing the cases $|\nabla_T w|\leq \frac{1}{\eps}$ and $|\nabla_T w|\geq \frac{1}{\eps}$. Combining these pointwise inequalities with \eqref{eq:interpolation_1} results in    the estimate  
\begin{align*}
\sum_{k=2}^{n-1}\frac{n}{k}\fint_{\S^{n-1}}\langle\sigma_k'(\nabla_Tw  P_T^t )  w,x  \rangle
&
  \lesssim  
 \fint_{\S^{n-1}}|w|^2
 +C(\eps) \fint_{\S^{n-1}}|\nabla_T w|^2
 +\eps \fint_{\S^{n-1}}  |\nabla_T w|^{n-1}
\,,
\end{align*}
  where $C(\eps):=1+\eps^{4-n}>0$.  
Combining Poincar\'e's inequality \eqref{eq:firstpoincare}  for the zero-mean map $w$ and the estimates \eqref{eq:nabla_w_L2} and \eqref{eq:nabla_w_Ln-1} on the $L^2$ and $L^{n-1}$ norms of $\nabla_T w$, this implies
\begin{align}\label{eq:sum_sigmakDw_low}
\sum_{k=2}^{n-1}\frac{n}{k}\fint_{\S^{n-1}}\langle\sigma_k'(\nabla_Tw  P_T^t )w,x \rangle
&
\leq C(n,\eps) \delta_{n-1}(\tilde u) +\eps \Lambda^2,
\end{align}
for any $\eps>0$ and some $C(n,\eps)>0$.
It remains to estimate the very last   summand in the last   term in \eqref{eq:Lambda_identity}.
For this we invoke first   \eqref{last_term_in_the_expansion} and   Wente's isoperimetric inequality, cf.\ \eqref{eq:isoperimetric}, which gives
\begin{align*}
\fint_{\S^{n-1}}\langle \sigma_{n}'( \nabla_T wP_T^t )  w,x  \rangle
=\fint_{\S^{n-1}}\bigg\langle w,\bigwedge_{i=1}^{n-1}\partial_{\tau_i}w\bigg\rangle
\leq \left[ \fint_{\S^{n-1}}\left(\frac{|\nabla_T w|^2}{n-1}\right)^{\frac{n-1}{2}}\right]^{\frac{n}{n-1}}.
\end{align*}
Using the $L^{n-1}$ estimate \eqref{eq:nabla_w_Ln-1} for $\nabla_T w$,   the convexity of the function $t\mapsto t^{n-1}$ ($t>0$), \eqref{eq:apriori_small_deficit} and \eqref{L2_small_conformal_s_2},   we deduce
\begin{align}\label{eq:last_term_Lambda}
\fint_{\S^{n-1}}\langle \sigma_{n}'( \nabla_T wP_T^t )  w,x  \rangle
&
  \lesssim   
\delta_{n-1}(\tilde u)^{\frac{n}{n-1}}+\Lambda^{\frac{2n}{n-1}}
 \lesssim 
 \delta_{n-1}(\tilde u) +  \theta^{\frac{2}{(n-1)^2}} \Lambda^2\,,
\end{align}
  with $\theta\in (0,1)$ as in \eqref{eq:close_to_identity}.   
Combining \eqref{eq:sigma1+sigma1^2/2}, \eqref{eq:sum_sigmak_3}, \eqref{eq:sum_sigmakDw_low} and \eqref{eq:last_term_Lambda} to estimate the right-hand side of \eqref{eq:Lambda_identity} we  obtain,
  \[\frac{1}{2}\Lambda^2\lesssim C(n,\eps)\delta_{n-1}(\tilde u)+ \big(\theta^{\frac{2}{(n-1)^2}}+\eps\big)\Lambda^2\,,\] 
  and by choosing $\theta\in (0,1)$ and $\e\in(0,1)$ small enough, we obtain
\begin{align*}
\frac 12 \Lambda^2 \leq C(n,\e) \delta_{n-1}(\tilde u) +\frac 14 \Lambda^2,
\end{align*}
Absorbing the last term in the left-hand side and recalling that $\Lambda=\dist(\nabla\tilde u_h(0);\SO(n))$,
this implies \eqref{eq:dist_of_linear_part_from_SO(n)} and concludes Step 2(b).

\medskip	
\end{step}

\begin{step}{3. (From a local to a global estimate via $W^{1,n-1}$-compactness)} Arguing by contradiction, suppose that the statement of Theorem \ref{main_thm} is false. Then, for every $k\in \mathbb{N}$ there exists a map $u_k\in \mathcal{A}_{\S^{n-1}}$ with $\delta_{n-1}(u_k)>0$ such that 

\begin{equation}\label{eq:contradictory_estimate}
\fint_{\S^{n-1}} |\nabla_T u_k-\nabla_T\phi|^{n-1}\geq k\delta_{n-1}(u_k)\, \quad \text{for all } \phi\in \Mob(\S^{n-1})\,.
\end{equation}
In particular, for $\phi:=\mathrm{id}_{\S^{n-1}}\in \Mob(\S^{n-1})$, by the convexity of the function $t\mapsto t^{n-1}$ $(t>0)$ we have 
\begin{align*}
\begin{split}
k\delta_{n-1}(u_k)& \leq \fint_{\S^{n-1}} |\nabla_T u_k-P_T|^{n-1}\\
&\leq 2^{n-2}\fint_{\S^{n-1}} \left(|\nabla_T u_k|^{n-1}+|P_T|^{n-1}\right)
= 2^{n-2}(n-1)^{\frac{n-1}{2}} [\delta_{n-1}(u_k)+ 2]\,,
\end{split}
\end{align*}
which, for  $k> \beta_n:=2^{n-2}(n-1)^{\frac{n-1}{2}}$, can be rewritten as $\delta_{n-1}(u_k)\leq \frac{2\beta_n}{k-\beta_n}\,.$ By letting $k\to\infty$ we obtain 
\[\lim_{k\to\infty}\delta_{n-1}(u_k)=0\,.\]
We can now use the compactness result from Lemma \ref{lemma:compactness} to obtain a contradiction: by Lemma \ref{lemma:topological}, we find $\psi_k\in \Mob(\S^{n-1})$ and $R\in \SO(n)$ so that the $v_k:=u_k\circ\psi_k\in \mathcal{A}_{\S^{n-1}}$ satisfy
\begin{gather}
\label{eq:zeroaverage} \fint_{\S^{n-1}} v_k=0\,.
\end{gather} 
Hence, by Lemma \ref{lemma:compactness}, we have  $v_k\to R\, \mathrm{id}_{\S^{n-1}}$  strongly in  $W^{1,n-1}(\S^{n-1};\S^{n-1})$ as $j\to \infty$, up to a not-relabeled subsequence.
Without loss of generality (up to considering $R^{-1} v_k$ instead of $v_k$ if necessary) we can also suppose that $R=I_n$. Then, for the constant $\theta \in (0,1) $ chosen in Step 2, we can find $k_0:=k_0(\theta)\in \mathbb{N}$ such that 
\begin{equation*}
\fint_{\S^{n-1}} |\nabla_T v_k-P_T|^{n-1}\leq \theta\ll 1 \quad \text{for all }  k\geq k_0\,.
\end{equation*}
The sequence $(v_k)_{k\geq k_0}$  satisfies both \eqref{eq:zeroaverage} and the  assumption \eqref{eq:close_to_identity} of Step 2: hence, by this step, we deduce that there exist $(\phi_k)_{k\geq k_0}\subset \Mob(\S^{n-1})$ such that
\begin{equation*}\label{eq:final_contradiction}
\fint_{\S^{n-1}} |\nabla_T u_k-\nabla_T (\phi_k\circ\psi_k^{-1})|^{n-1} =  \fint_{\S^{n-1}} |\nabla_T v_k-\nabla_T \phi_k|^{n-1}\leq  C_n  \delta_{n-1}(v_k) \quad \text{for all } k\geq k_0\,,
\end{equation*}
which clearly contradicts \eqref{eq:contradictory_estimate}.
\end{step}
	
\appendix

\section{On the optimality of Theorem \ref{main_thm}}\label{optimality_example}

We explain here Remark \ref{sharpness}, by considering a detailed 
example in the same spirit as the second example in \cite[Remark 1.2]{figalli2022sobolev}. For the sake of making the calculations simpler, we equivalently consider maps in the class
\begin{equation}\label{eq: new_adm_class}
\hspace{-0.5em}\mathcal{A}_{\R^{n-1}}:=\bigg\{u\in \dot{W}^{1,n-1}(\R^{n-1};\S^{n-1})\,,\ \frac{1}{n\omega_n}\int_{\R^{n-1}}\big\langle u,\bigwedge_{i=1}^{n-1}\partial_{x_i} u\big\rangle=1 \bigg\} \,,
\end{equation}
where $(\partial_{x_i})_{i=1,\dots,n-1}$ denote the standard partial derivatives in $\R^{n-1}$ and 
\[\dot{W}^{1,n-1}(\R^{n-1};\S^{n-1}):=\bigg\{v\in W^{1,n-1}_{\mathrm{loc}}(\R^{n-1};\S^{n-1})\colon \int_{\R^{n-1}}|\nabla v|^{n-1}\,,\mathrm{d}x<+\infty\bigg\}\]
is the corresponding homogeneous Sobolev space. The above class of maps can be identified with the class $\mathcal{A}_{\S^{n-1}}$, defined in \eqref{admissible_class_of_maps}, via the inverse stereographic projection through the south pole $-e_n$, given by the map $\phi:\R^{n-1}\cup\{\infty\}\to \S^{n-1}$ defined by
\begin{equation}\label{eq: inverse_stereo}
\phi(x)=\bigg(-\frac{2x_1}{1+|x|^2},\dots,-\frac{2x_{n-1}}{1+|x|^2}, \frac{1-|x|^2}{1+|x|^2}\bigg)\,.
\end{equation}
We note that $\phi$ is orientation-preserving and that
\begin{equation}\label{eq: energy_of_phi}
\int_{\R^{n-1}}|\nabla \phi|^{n-1}\,\mathrm{d}x=\gamma_n:=(n-1)^{\frac{n-1}{2}}n\omega_n\,.
\end{equation}
Analogously,  the group $\Mob(\mb S^{n-1})$ defined in \eqref{def: Mobius_group} can be identified with 
\begin{equation}\label{eq: Mobius_in_R_n-1}
\varPsi:= \{R\phi(\rho(\cdot-x_0))\colon \ R\in \SO(n)\,,\ x_0\in \R^{n-1}\,, \ \rho>0\}\,,
\end{equation}
 since clearly $\varPsi\phi^{-1}$ is a Lie subgroup of $\Mob(\mb S^{n-1})$ with the same dimension (as can be easily checked by considering their corresponding Lie algebras).

With these identifications, \eqref{conf_S_S_quantitative} is equivalent to showing that for every $u\in \mathcal{A}_{\R^{n-1}}$,
\begin{equation}\label{eq: main_estimate_in_R_n-1}
\inf_{\psi \in \varPsi} \int_{\R^{n-1}} \left|\nabla u-\nabla \psi\right|^{n-1} \mathrm{d}x \leq C_n \bigg(\int_{\R^{n-1}}|\nabla u|^{n-1}\,\mathrm{d}x-\gamma_n\bigg)\,,
\end{equation}
where $\gamma_n$ is as in \eqref{eq: energy_of_phi}, for a possibly different dimensional constant $C_n>0$.

Let now $\zeta$ be a non-trivial cut-off function such that
\begin{equation}\label{eq: properties_of_zeta}
\zeta\in C^\infty_c(D_1;\R_+)\,, \quad \zeta|_{D_{1/2}}\equiv 1\,,\quad 0\leq \zeta\leq 1\,,
\end{equation}
where for $\rho>0$ we denote 
$$D_\rho:=\{x\in \R^{n-1}\colon |x|\leq \rho\}\,.$$
For a sequence of numbers $(\eps_{k})_{k\in \N}\subset \R_+$ with $\eps_k\searrow 0$ and points $(x_k)_{k\in\N}\subset \R^{n-1}$ with $|x_k|\to \infty$,  consider the maps $(u_k)_{k\in \N}\subset \mathcal{A}_{\R^{n-1}}$, defined via
\begin{equation}\label{eq: maps_for_optimality}
u_k(x):=\frac{\phi(x)+\eps_k\zeta(x-x_k)e_1}{|\phi(x)+\eps_k\zeta(x-x_k)e_1|}\,,
\end{equation}
for which by the definition of $\zeta$ and the fact that $|\phi|\equiv 1$, we have that
\begin{equation}\label{eq:optimality_maps_almost_identity}
u_k\equiv \phi\ \ \text{in }\ \R^{n-1}\setminus D_1(x_k)\,.
\end{equation}

The sequence $(u_k)_{k\in \N}$ gives the optimality claimed in Remark \ref{sharpness}:

\begin{proposition}
Choosing the sequences $x_k\nearrow \infty$ and $\e_k\searrow 0$ appropriately,  for all $k  \in \N $ large enough, we have
\begin{align}\label{eq:scaling_of_deficit}
\int_{\R^{n-1}}|\nabla u_k|^{n-1}\,\mathrm{d}x-\gamma_n\sim \eps_k^{n-1}+o(\eps_k^{n-1})\,,\\
\label{eq:scaling_of_dist}
\inf_{\psi \in \varPsi} \int_{\R^{n-1}} \left|\nabla u_k-\nabla \psi\right|^{n-1} \mathrm{d}x \sim\eps_k^{n-1}+o(\eps_k^{n-1})\,.
\end{align}	
\end{proposition}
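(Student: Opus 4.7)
The plan is first to choose sequences with $|x_k|^2\varepsilon_k\to\infty$ (for instance $\varepsilon_k=1/k$ and $|x_k|=k$), so that on the support $D_1(x_k)$ of the perturbation the quantity $|\nabla\phi|$ is negligible compared to the bump of size $\varepsilon_k$. From the explicit form of $\phi$ in \eqref{eq: inverse_stereo} one checks $\phi(x)=-e_n+O(|x|^{-1})$, $\langle\phi(x),e_1\rangle=O(|x|^{-1})$ and $|\nabla\phi(x)|=O(|x|^{-2})$ as $|x|\to\infty$. Thus on $D_1(x_k)$ the vector $v_k:=\phi+\varepsilon_k\zeta(\cdot-x_k)e_1$ satisfies $|v_k|^2=1+2\varepsilon_k\zeta\langle\phi,e_1\rangle+\varepsilon_k^2\zeta^2=1+O(\varepsilon_k/|x_k|)+O(\varepsilon_k^2)$, and a Taylor expansion of the nonlinear projection $v_k\mapsto v_k/|v_k|$ yields
\[\nabla u_k-\nabla\phi=\varepsilon_k\,\nabla\zeta(\cdot-x_k)\otimes e_1+r_k\qquad\text{on }D_1(x_k),\]
with $\|r_k\|_{L^{n-1}(D_1(x_k))}^{n-1}=o(\varepsilon_k^{n-1})$ under this scaling. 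Splitting $\int_{\R^{n-1}}|\nabla u_k|^{n-1}$ into $\int_{\R^{n-1}\setminus D_1(x_k)}|\nabla\phi|^{n-1}=\gamma_n-O(|x_k|^{-2(n-1)})$ and $\int_{D_1(x_k)}|\nabla u_k|^{n-1}=\varepsilon_k^{n-1}\int_{D_1}|\nabla\zeta|^{n-1}+o(\varepsilon_k^{n-1})$, I immediately obtain \eqref{eq:scaling_of_deficit} with positive leading coefficient.

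The upper bound $\inf_{\psi\in\varPsi}\int|\nabla u_k-\nabla\psi|^{n-1}\lesssim\varepsilon_k^{n-1}$ in \eqref{eq:scaling_of_dist} comes from choosing $\psi=\phi$: since $u_k\equiv\phi$ outside $D_1(x_k)$, the same expansion gives $\int|\nabla u_k-\nabla\phi|^{n-1}=\varepsilon_k^{n-1}\int_{D_1}|\nabla\zeta|^{n-1}+o(\varepsilon_k^{n-1})$. The substantive part is the matching lower bound, which I would prove by contradiction: if some $\psi_k=R_k\phi(\rho_k(\cdot-x_{0,k}))\in\varPsi$ realized $\int|\nabla u_k-\nabla\psi_k|^{n-1}=o(\varepsilon_k^{n-1})$, then Minkowski's inequality combined with the upper bound forces $\|\nabla\psi_k-\nabla\phi\|_{L^{n-1}(\R^{n-1})}\to 0$. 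The main obstacle is then to convert this $L^{n-1}$-closeness into compactness of the parameters, i.e.\ to show that up to the stabilizer of $\phi$ in $\varPsi$ one has $R_k\to I_n$, $x_{0,k}\to 0$ and $\rho_k\to 1$; this is a concentration/escape dichotomy for conformally invariant energies that rules out $\rho_k\to 0$ (forcing $\psi_k$ to a constant), $\rho_k\to\infty$, or $|x_{0,k}|\to\infty$ (shifting the $L^{n-1}$-mass of $\nabla\psi_k$ away from where $|\nabla\phi|^{n-1}$ is concentrated).

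Once parameter compactness is in hand, the change of variables $y=\rho_k(x-x_{0,k})$ gives
\[\int_{D_1(x_k)}|\nabla\psi_k|^{n-1}\,\d x=\int_{D_{\rho_k}(\rho_k(x_k-x_{0,k}))}|\nabla\phi(y)|^{n-1}\,\d y=O(|x_k|^{-2(n-1)})=o(\varepsilon_k^{n-1}),\]
since the integration domain is a bounded-radius ball centered at a point of norm tending to infinity, while $|\nabla\phi(y)|^{n-1}=O(|y|^{-2(n-1)})$ at infinity. Combining this with $\|\nabla u_k\|_{L^{n-1}(D_1(x_k))}^{n-1}=\varepsilon_k^{n-1}\int_{D_1}|\nabla\zeta|^{n-1}+o(\varepsilon_k^{n-1})$ from the first paragraph and the reverse triangle inequality in $L^{n-1}(D_1(x_k))$, I deduce $\int_{D_1(x_k)}|\nabla u_k-\nabla\psi_k|^{n-1}\gtrsim\varepsilon_k^{n-1}$, contradicting the initial assumption. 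This completes the proof, and the sharpness claimed in Remark \ref{sharpness} follows, since any stability estimate with exponent $\beta>1$ on the right-hand side of \eqref{eq: main_estimate_in_R_n-1} would force $\varepsilon_k^{n-1}\lesssim\varepsilon_k^{(n-1)\beta}$, impossible as $k\to\infty$.
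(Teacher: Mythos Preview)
Your proof is correct and follows essentially the same route as the paper's: expand $\nabla u_k$ to leading order $\varepsilon_k$, read off both the deficit and the $\psi=\phi$ estimate, then show that any near-optimal $\psi_k$ must have parameters converging to those of $\phi$, so that $|\nabla\psi_k|$ remains small on $D_1(x_k)$ and the (reverse) triangle inequality concludes. The only notable difference is in the parameter-compactness step you flag as ``the main obstacle'': you sketch it as a concentration/escape dichotomy, whereas the paper dispatches it in one line by observing that $\varPsi$ is a finite-dimensional Lie group, so $\psi_k\to\phi$ in $\dot W^{1,n-1}$ automatically upgrades to $\psi_k\to\phi$ in $C^1(\R^{n-1})$---this is slicker packaging of exactly the dichotomy you describe.
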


\begin{proof}
Before proceeding with the main part of the proof, we begin with some preliminary computations. Consider  the maps $\hat v:\R^n\setminus\{0\}\to \S^{n-1}$ and $w_k:\R^{n-1}\to \R^n$ defined by 
\begin{equation}\label{eq:auxiliary_maps}
\hat v(y):=\frac{y}{|y|}\quad  \text{ and } \quad w_k(x):=\phi(x)+\eps_k\zeta(x-x_k)e_1\,.
\end{equation} 
Since 
\[\nabla \hat v(y)=\frac{I_n}{|y|}-\frac{y\otimes y}{|y|^3}\,,\]
for every $x\in D_1(x_k):=x_k+D_1$ we can compute
\begin{align}\label{eq: new_gradient}
\begin{split}
\nabla u_k(x)& =\nabla (\hat v\circ w_k)(x)\\
&=\bigg(\frac{I_n}{|w_k(x)|}-\frac{w_k(x)\otimes w_k(x)}{|w_k(x)|^3}\bigg)\nabla w_k(x)\\[2pt]
&=\frac{\nabla \phi(x)}{|w_k(x)|}+\frac{\eps_ke_1\otimes\nabla \zeta(x-x_k)}{|w_k(x)|}-\frac{w_k(x)\otimes w_k(x)}{|w_k(x)|^3}\nabla w_k(x)\,.
\end{split}
\end{align}
Note that 
\begin{align}\label{eq: w_k_aux}
\begin{split}	
|w_k(x)|&=\sqrt{1+2\eps_k\phi^1(x)\zeta(x-x_k)+\eps_k^2\zeta^2(x-x_k)}\\
&=1+\eps_k\phi^1(x)\zeta(x-x_k)+\eps_k^2r_{k,0}(x)\,,
\end{split}
\end{align}
for some remainder term $r_{k,0}:\R^{n-1}\to \R$ with $\|r_{k,0}\|_{L^\infty(\R^{n-1})}\lesssim 1$ (since $|\phi| \equiv  1$ and $0\leq \zeta\leq 1$). Actually, since 
\[ \ \phi^j(x)\to 0 \text{ for all } j=1,\dots, n-1\,\ \mathrm{and }\ \  \nabla\phi(x)\to 0\,, \ \text{as } |x|\to \infty\,,\] 
the points $(x_k)_{k\in \N}\subset \R^{n-1}$ can be chosen such that 
\begin{equation}\label{eq: points_x_k_of_decay}
\|\nabla \phi\|_{L^\infty(D_1(x_k))}+\max_{j\in\{1,\dots,n-1\}}\|\phi^j\|_{L^\infty(D_1(x_k))}\lesssim \eps^2_k\,,
\end{equation}
so that \eqref{eq: w_k_aux} yields
\begin{align}\label{eq: 2_w_k_aux}
\begin{split}	
|w_k(x)|&=1+\eps_k^2r_{k,1}(x)\,,
\end{split}
\end{align}
for some new  $r_{k,1}:\R^{n-1}\to \R$ with $r_{k,1}= 0$ in $\R^{n-1}\setminus D_1(x_k)$ and $\|r_{k,1}\|_{L^\infty(\R^{n-1})}\lesssim 1$.  
Regarding the last term in the last line of \eqref{eq: new_gradient}, using the fact that 
\[(\phi\otimes \phi)\nabla \phi=0\,,\]
since $|\phi| \equiv  1$, 
we can calculate
\begin{align}\label{eq:cubic_term_in_w_k}
\begin{split}
(w_k\otimes w_k)\nabla w_k& = \eps_k\phi^1\big(\phi\otimes\nabla \zeta(\cdot-x_k)\big)+ \eps_k \zeta(\cdot-x_k)\big(\phi\otimes \nabla\phi^1\big)\\ 
&\quad  +\eps_k^2\zeta^2(\cdot-x_k)e_1\otimes \nabla \phi^1 \\
&\quad + \eps_k^2 \zeta(\cdot-x_k)\left(\phi\otimes\nabla \zeta(\cdot-x_k)+\phi^1e_1\otimes\nabla\zeta(\cdot-x_k)\right) \\
&\quad +\eps_k^3\zeta(\cdot-x_k)\big(e_1\otimes\nabla\zeta(x-x_k)\big)\\
& =  \eps_k^2Z_{k,0}(x)\,,
\end{split}
\end{align}
for some $Z_{k,0}:\R^{n-1}\to \R^{n\times (n-1)}$ with $Z_{k,0}  \equiv 0 $ in $\R^{n-1}\setminus D_1(x_k)$ and $\|Z_{k,0}\|_{L^\infty(\R^{n-1})}\lesssim 1$. In the last line of \eqref{eq:cubic_term_in_w_k} we used \eqref{eq: properties_of_zeta} and \eqref{eq: points_x_k_of_decay}.
Therefore, by \eqref{eq: new_gradient}, \eqref{eq: 2_w_k_aux}, \eqref{eq:cubic_term_in_w_k} and another Taylor expansion we deduce that 
\begin{equation}\label{eq: nabla_u_k_final_expression}
\nabla u_k(x)=\nabla \phi(x)+\eps_k\big(e_1\otimes\nabla \zeta(x-x_k)\big)+\eps_k^2Z_{k,1}(x)\,,
\end{equation}
for a map $Z_{k,1}:\R^{n-1}\to\R^{n\times (n-1)}$ with $\|Z_{k,1}\|_{L^\infty(\R^{n-1})}\lesssim 1$. Again, in view of \eqref{eq: properties_of_zeta} and \eqref{eq:optimality_maps_almost_identity}, we can take $Z_{k,1}  \equiv  0$ in $\R^{n-1}\setminus D_1(x_k)$. 

We are now ready to prove our assertions. Note that, by Theorem \ref{main_thm}, it suffices to prove the upper bound in \eqref{eq:scaling_of_deficit} and the lower bound in \eqref{eq:scaling_of_dist}. 

We begin by proving \eqref{eq:scaling_of_deficit}.  
From \eqref{eq: nabla_u_k_final_expression} we have
\begin{align*}
|\nabla u_k(x)|^{n-1}
&
\leq 
\big(|\nabla \phi(x)|+\eps_k |\nabla \zeta(x-x_k)|+\eps_k^2|Z_{k,1}(x)|\big)^{n-1}.
\end{align*}
Expanding the right-hand side and using \eqref{eq: points_x_k_of_decay} to estimate the mixed terms, we deduce the pointwise estimate
\begin{equation}\label{eq:almost_orthogonality_pointwise}
|\nabla u_k|^{n-1}\leq |\nabla \phi|^{n-1}+\eps_k^{n-1}|\nabla\zeta(\cdot-x_k)|^{n-1}+\eps_k^{n}r_{k,1}\,,
\end{equation}
for a new function $r_{k,1}:\R^{n-1}\to[0,+\infty)$ with $r_{k,1}|_{\R^{n-1}\setminus D_1(x_k)}= 0$ and $\|r_{k,1}\|_{L^\infty(\R^{n-1})}\lesssim 1\,.$
Thus,  by \eqref{eq: energy_of_phi} 
we get	
\begin{align*}
\int_{\R^{n-1}} |\n u_k|^{n-1}\,\d x  -\gamma_n & = \int_{\R^{n-1}}\left( |\n u_k|^{n-1} - |\n \phi|^{n-1} \right)\d x  \\
& \leq \e_k^{n-1} \int_{D_1} |\n \zeta|^{n-1} + o(\e_k^{n-1})\,,
\end{align*}
proving the upper bound in \eqref{eq:scaling_of_deficit}. 

Instead of proving \eqref{eq:scaling_of_dist} directly, we first prove it with $\phi$ in place of $\psi$. 
Indeed, by  \eqref{eq:optimality_maps_almost_identity} and \eqref{eq: nabla_u_k_final_expression},
\begin{align*}
\begin{split}
\int_{\R^{n-1}} \left|\nabla u_k-\nabla \phi\right|^{n-1}=\eps_k^{n-1}\int_{D_1(x_k)}|e_1\otimes\nabla \zeta(\cdot-x_k)+\eps_kZ_{k,1}|^{n-1}\,
\end{split}
\end{align*}
and,  by   a Taylor expansion,  
 we deduce that   for $k\in \N$ large enough,  
\begin{align}
\label{eq:estimatewithid}
\begin{split}
 \int_{\R^{n-1}} \left|\nabla u_k-\nabla \phi\right|^{n-1} \geq \e_k^{n-1} \int_{D_1}|\nabla \zeta|^{n-1}+ o(\e_k^{n-1})\,. 
\end{split}
\end{align}
At this point, it remains to verify that
the infimum in \eqref{eq:scaling_of_dist}
is of the same order as  the value of the integral 
 at $\psi:=\phi$.
 To see this, let $ (\psi_k)_{k\in \N}\subset \varPsi$ be such that
\begin{equation}
\label{eq:choicepsik}
\int_{\R^{n-1}} |\n u_k - \n \psi_k|^{n-1}\, \d x =\inf_{ \psi \in \Psi} \int_{\R^{n-1}} |\n u_k - \n \psi|^{n-1}\, \dx + o(\e_k^{n-1})\,,
\end{equation}
and note that we must have $\psi_k \to \phi$ for instance in $\dot W^{1,n-1}(\R^{n-1})$; however, as $\Psi$ is   a   finite dimensional   Lie group , in fact $\psi_k\to \phi$ in any norm.    In particular we have $\psi_k\to\phi$ in $C^1(\R^{n-1})$, 
which implies
\begin{align*}
\int_{D_1(x_k)}|\nabla \psi_k -\nabla\phi|^{n-1}\, \mathrm{d}x =o(\e_k^{n-1})\, ,
\end{align*}
since $ \|\nabla\phi\|_{L^{\infty}(D_1(x_k))} \lesssim \e_k^2 $ by \eqref{eq: points_x_k_of_decay}. Therefore,
\begin{align*}
\int_{\R^{n-1}}|\nabla u_k-\nabla\psi_k|^{n-1}\, \mathrm{d}x
&
\geq \int_{D_1(x_k)}|\nabla u_k -\nabla\psi_k|^{n-1}\,	\d x
\\
&
\geq \int_{D_1(x_k)}|\nabla u_k-\nabla \phi|^{n-1}\, \d x  - o(\e_k^{n-1}) 
\\
&
=\int_{\R^{n-1}}|\nabla u_k-\nabla\phi|^{n-1}\,\d x - o(\e_k^{n-1})\,.
\end{align*}
The last equality is valid because $u_k=\phi$ outside $D_1(x_k)$.
The  lower bound in \eqref{eq:scaling_of_dist} now follows immediately from \eqref{eq:estimatewithid} and \eqref{eq:choicepsik} , and the proof is complete.  
\end{proof}	
	
\section*{Acknowledgements} 
AG was supported by Dr.\ Max R\"ossler, the Walter H\"afner Foundation and the ETH Z\"urich Foundation.
KZ was supported by the Deutsche Forschungsgemeinschaft (DFG, German Research Foundation) under Germany's Excellence Strategy EXC 2044 -390685587, Mathematics M\"unster: Dynamics--Geometry--Structure.  XL was supported by 
the ANR project ANR-22-CE40-0006.

The authors would like to thank the Hausdorff Institute for Mathematics (HIM) in Bonn,
funded by the Deutsche Forschungsgemeinschaft (DFG, German Research Foundation) under Germany's Excellence Strategy – EXC-2047/1 – 390685813,  and the organizers of the Trimester Program ``Mathematics for Complex Materials'' (03/01/2023-14/04/2023, HIM, Bonn) for their hospitality during the period that this work was initiated.

\bibliographystyle{abbrv}
\bibliography{stability-conf-maps}

\end{document}